\newtheorem{theorem}{Theorem}[section]
\newtheorem{lemma}[theorem]{Lemma}
\newtheorem{prop}[theorem]{Proposition}
\newtheorem{cor}[theorem]{Corollary}
\newtheorem{conjecture}[theorem]{Conjecture}
\theoremstyle{definition}
\newtheorem{defn}[theorem]{Definition}
\theoremstyle{remark}
\numberwithin{equation}{section}
\begin{document}

\title
  {\bf Solvable automorphism groups of a compact K\" ahler manifold}


\author{Jin Hong Kim}






\maketitle

\begin{abstract}
Let $X$ be a compact K\" ahler manifold of complex dimension $n$. Let $G$ be a connected solvable subgroup of the automorphism group ${\rm Aut}(X)$, and let $N(G)$ be the normal subgroup of $G$ of elements of null entropy. One of the goals of this paper is to show that $G/N(G)$ is a free abelian group of rank $r(G) \le n-1$ and that the rank estimate is optimal. This gives an alternative proof of the conjecture of Tits type in \cite{KOZ}. In addition, we show some non-obvious implications on the structure of solvable automorphism groups of compact K\" ahler manifolds. Furthermore, we also show that if the rank $r(G)$ of the quotient group $G/N(G)$ is equal to $n-1$ and the identity component ${\rm Aut}_0(X)$ of ${\rm Aut}(X)$ is trivial, then $N(G)$ is a finite set. The main strategy of this paper is to combine the method of Dinh and Sibony and the theorem of Birkhoff-Perron-Frobenius (or Lie-Kolchin type), and one argument of D.-Q. Zhang originated from the paper of Dinh and Sibony plays an important role.
\end{abstract}

\section{Introduction and Main Results} \label{sec1} 

Let $X$ be a compact K\" ahler manifold. We denote by ${\rm Aut}(X)$ the biholomorphism group of $X$.  The aim of this paper is to study the structure of the automorphism group ${\rm Aut}(X)$ of $X$. In fact, in \cite{DS} Dinh and Sibony have already given interesting results for abelian automorphism groups of compact K\" ahler manifolds.
To describe our main theorem, we first need to set up some terminology. Let $g$ be an automorphism of $X$. The spectral radius
\[
\rho(g)=\rho(g^\ast|_{H^2(X, {\bf C})})
\]
of the action of $g$ on the cohomology ring $H^\ast(X, {\bf C})$ is defined to be the maximum of the absolute values of eigenvalues on the ${\bf C}$-linear extension of $g^\ast|_{H^2(X, {\bf C})}$. It is obvious from the definition that $\rho(g^{\pm 1})$ is always less than or equal to $\rho(g^{\mp})^{n-1}$ (e.g., see \cite{Gu05}). We call $g$ \emph{of null entropy} (resp. \emph{of positive entropy}) if the spectral radius $\rho(g)$ is equal to $1$ (resp. $>1$). It is known by the results of Gromov and Yomdin in \cite{Gr} and \cite{Yo} that
\[
\rho(g^\ast|_{H^2(X, {\bf C})})=\rho(g^\ast|_{H^{1,1}(X, {\bf C})}).
\]
We say that a subgroup $G$ of automorphisms is \emph{of null entropy} (resp. \emph{of positive entropy}) if all nontrivial elements of $G$ are of null entropy (resp. of positive entropy).

With these said, Dinh and Sibony proved in \cite{DS} that if $G$ is a abelian subgroup of ${\rm Aut}(X)$ and $N(G)$ is the set of elements of zero entropy, then $N(G)$ is a normal subgroup of $G$ and $G/N(G)$ is a free abelian group of rank $\le n-1$. It is easy to see that any subgroup $G$ of ${\rm Aut}(X)$ can be considered as a subgroup of $GL(V, {\bf R})$, where $V$ is the Dolbeault cohomology group $H^{1,1}(X, {\bf R})$.

We call a group \emph{virtually solvable} if it has a solvable subgroup of finite index. Let $V_{\bf C}$ be a finite dimensional vector space over ${\bf C}$. A solvable subgroup $G$ of $GL(V_{\bf C})$ is called \emph{connected} if its Zariski closure $\bar G$ in $GL(V_{\bf C})$ is connected. It can be shown that given a virtually solvable subgroup $G$ of $GL(V_{\bf C})$, we can find a connected solvable, finite index subgroup $G_1$ of $G$. Moreover, it is easy to see that any subgroup of a solvable group $G$ and any quotient group of $G$ are solvable and that the closure of $G$ is also solvable (see Section 2 of \cite{KOZ}). For a subgroup of $GL(V_{\bf C})$, Tits proved the following alternative theorem in \cite{Ti}.

\begin{theorem}[Tits] \label{thm1.1}
Let $G$ be a subgroup of $GL(V_{\bf C})$. Then $G$ is either virtually solvable or contains a non-commutative free group ${\bf Z}\ast {\bf Z}$.
\end{theorem}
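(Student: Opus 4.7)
The plan is to prove the contrapositive via the ping-pong strategy: assuming $G$ is not virtually solvable, I will produce two elements whose high powers generate a free subgroup isomorphic to ${\bf Z} \ast {\bf Z}$. First I would pass to the Zariski closure $\overline{G}$ in $GL(V_{\bf C})$; since $\overline{G}$ is also not virtually solvable, its identity component $\overline{G}^0$ is a connected algebraic group whose Levi decomposition exhibits a non-trivial semisimple quotient $H$. The image of $G$ in $H$ remains Zariski-dense and non-virtually-solvable, so I may reduce to the study of a Zariski-dense subgroup of a semisimple algebraic group.

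Next I would bring in a local field. By a specialization argument one picks a finitely generated subring containing the matrix entries of a generating set of $G$ and embeds it into a local field $K$ (archimedean or non-archimedean) with absolute value $|\cdot|_K$, in such a way that some element of $G$ acts on the projectivization ${\bf P}(V_K)$ with a strictly dominant eigenvalue. Such an element $a$ is \emph{proximal}: it has an attracting fixed point $p_a^+ \in {\bf P}(V_K)$ and a repelling projective hyperplane $H_a^-$, and its large powers contract the complement of $H_a^-$ into arbitrarily small neighbourhoods of $p_a^+$. Using the Zariski-density of $G$ in $H$, together with a judicious choice of representation so that the highest-weight space is a line, one finds $c \in G$ for which the conjugate $b := c a c^{-1}$ is a second proximal element whose attractor/repellor pair $(p_b^+, H_b^-)$ is in general position with $(p_a^+, H_a^-)$ and with $(p_{a^{-1}}^+, H_{a^{-1}}^-)$.

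With $a$ and $b$ in hand, the argument concludes by the Klein ping-pong lemma: one chooses pairwise disjoint open neighbourhoods $U_a^{\pm}$ and $U_b^{\pm}$ of the four attractors, and by the proximality and general-position hypotheses, for $N$ sufficiently large the maps $a^{\pm N}$ send the complement of $U_a^{\mp}$ into $U_a^{\pm}$, and likewise for $b^{\pm N}$. A standard induction on word length then shows that every non-trivial reduced word in $a^N$ and $b^N$ has non-trivial action on ${\bf P}(V_K)$, so $\langle a^N, b^N \rangle \cong {\bf Z} \ast {\bf Z}$. The main obstacle is the production of the proximal element in the second step: the group $G$ a priori lives only over ${\bf C}$ and need not contain any element with a strictly dominant eigenvalue, so one must genuinely exploit both the specialization to a well-chosen local field and the representation theory of the semisimple quotient $H$. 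Once proximality is in place, the ping-pong mechanics are essentially formal.
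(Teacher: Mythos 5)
The paper does not prove this statement at all: it is Tits' alternative, quoted verbatim from Tits' 1972 paper \cite{Ti} and used as a black box, so there is no internal argument to compare yours against. Your outline is the standard one from Tits' original proof (and its modern expositions): reduce to a Zariski-dense subgroup of a semisimple group via the Levi quotient, specialize the matrix entries into a local field to manufacture a proximal element, conjugate to get a second proximal element in general position, and run ping-pong on the projective space. As a roadmap this is correct, and you rightly identify the production of the proximal element as the genuinely hard step rather than the ping-pong mechanics.

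Two caveats you should address if you intend this as more than a sketch. First, the theorem as stated is for an \emph{arbitrary} subgroup $G$ of $GL(V_{\bf C})$, while your specialization step starts from ``a finitely generated subring containing the matrix entries of a generating set of $G$,'' which presupposes $G$ finitely generated. The reduction from the general characteristic-zero case to the finitely generated case is a real (if standard) step: one must show that if every finitely generated subgroup of $G$ is virtually solvable then $G$ itself is, which in characteristic zero uses a uniform bound (depending only on $\dim V_{\bf C}$) on the index of the solvable subgroup and on its derived length. Second, the passage from ``$\overline{G}^0$ not solvable'' to ``non-trivial semisimple quotient on which the image of $G$ is still Zariski-dense and the hypotheses for producing a proximal element are met'' hides the choice of a suitable irreducible representation with one-dimensional highest-weight space and the verification that some specialization actually yields an element with a strictly dominant eigenvalue; these occupy the bulk of Tits' paper. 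Neither gap is a wrong turn, but both are where the work lives.
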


In view of the above Tits' alternative and Dinh-Sibony's theorem, it is natural to ask the following interesting conjecture which is called a \emph{conjecture of Tits type}.

\begin{conjecture} \label{conj1.1}
Let $X$ be a compact K\" ahler manifold of complex dimension $n$, and let $G$ be a connected solvable subgroup of the automorphism group ${\rm Aut}(X)$. Let
\[
N(G)=\{ g\in G\, |\, g\ \text{is\ of\ null\ entropy} \}.
\]
Then $G/N(G)$ is a free abelian group of rank $\le n-1$.
\end{conjecture}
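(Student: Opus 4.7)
The plan is to combine the Dinh--Sibony technique of producing characters from common eigenvectors in the K\"ahler cone with the Birkhoff--Perron--Frobenius (Lie--Kolchin type) theorem, which furnishes such common eigenvectors for connected solvable group actions preserving a pointed convex cone, thereby removing the commutativity hypothesis used in \cite{DS}. First, I would view $G$ through the induced representation on $V = H^{1,1}(X, {\bf R})$, which preserves the closed K\"ahler cone $\overline{\mathcal{K}(X)}$. Since the Zariski closure $\bar G$ of the image in $GL(V)$ remains connected and solvable, Birkhoff--Perron--Frobenius yields a nonzero nef class $c_1 \in \overline{\mathcal{K}(X)}$ together with a multiplicative character $\chi_1 \colon G \to {\bf R}_{>0}$ satisfying $g^\ast c_1 = \chi_1(g)\, c_1$ for every $g \in G$.

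Next I would extract further characters $\chi_2,\dots,\chi_k$ by an inductive construction: either by applying Lie--Kolchin to a suitable $G$-invariant quotient of $V$ (with the image of $\overline{\mathcal{K}(X)}$ as an invariant convex cone), or, following Zhang, by considering the $G$-invariant subspaces of $H^{j,j}(X,{\bf R})$ generated by wedge products $c_{i_1} \wedge \dots \wedge c_{i_j}$ and repeating the eigenvector argument there. The upshot is a homomorphism $\chi = (\log\chi_1,\dots,\log\chi_k) \colon G \to {\bf R}^k$. Two things then remain to verify: (a) that $\ker \chi \subseteq N(G)$, i.e.\ an element whose character values are all equal to $1$ has spectral radius $1$, via the Gromov--Yomdin identity $\rho(g) = \rho(g^\ast|_{H^{1,1}})$ combined with a Khovanskii / Hodge--Riemann style estimate; and (b) that $k \le n-1$ via the Khovanskii--Teissier inequality applied to the nef classes $c_1,\dots,c_k$, after which discreteness of $\chi(G) \subset {\bf R}^k$ is established as in \cite{DS}, so that $G/N(G) \hookrightarrow \chi(G)$ is free abelian of rank $\le n-1$.

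The main obstacle I anticipate lies in step (a): the common eigenvectors produced by Birkhoff--Perron--Frobenius may lie on the boundary of $\overline{\mathcal{K}(X)}$ rather than in its interior, so the Khovanskii-type positivity arguments that work cleanly for K\"ahler classes must be replaced by more delicate reasoning valid for merely nef classes. This is precisely where the refinement of the Dinh--Sibony argument due to D.-Q.~Zhang singled out in the abstract is expected to play its decisive role; once that refinement is in hand, the assembly of the characters $\chi_i$ and the application of Hodge index estimates to bound the rank should be comparatively routine. A secondary, more technical, point will be to arrange the inductive extraction of $\chi_2,\dots,\chi_k$ so that the resulting cones at each stage remain pointed and closed, which is needed to keep applying Birkhoff--Perron--Frobenius.
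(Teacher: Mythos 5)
Your proposal follows essentially the same route as the paper: apply the Birkhoff--Perron--Frobenius/Lie--Kolchin theorem for connected solvable groups preserving the closed K\"ahler cone to extract common eigenvectors and multiplicative characters, supplement these inductively via wedge products $c_1\wedge\cdots\wedge c_j$ in $H^{j,j}(X,{\bf R})$ to build a homomorphism into ${\bf R}^{n-1}$, identify its kernel with $N(G)$ using the Dinh--Sibony lemma on wedge products of nef (possibly boundary) classes, and obtain discreteness from the integrality of the action on $H^2(X,{\bf Z})$. The difficulties you flag — boundary eigenvectors and keeping the induced cones pointed at each inductive stage — are exactly the ones the paper handles via Lemma 4.3 of Dinh--Sibony and the slant-product argument, so the plan is sound and matches the paper's proof.
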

It is clear that the rank estimate is optimal from the case $X=E^n$, where $E$ is an elliptic curve.

The aim of this paper is to give a proof of the conjecture of Tits type and show some more non-trivial results. Indeed, the conjecture has been proved  by Keum, Oguiso, and Zhang in the paper \cite{KOZ} except for the rank estimate and then in full generality by Zhang in \cite{Zh}. It seems to have been known that the Tits alternative modulo the rank estimate is true by a direct consequence of Tits theorem for linear groups and Lieberman's results on automorphism groups (see \cite{Ca}). In \cite{KOZ}, Keum, Oguiso, and Zhang also exhibited various examples such as complex tori, hyperk\" ahler manifolds and minimal threefolds for which the full conjecture of Tits type holds.

The proof of Theorem \ref{thm1.1} of this paper gives the sharp rank estimate as well as some non-obvious implications on the structure of solvable automorphism groups of compact K\" ahler manifolds. We like to emphasize that Theorem \ref{thm1.2} (b)-(c) below are completely new. The main strategy of the proof comes from the paper \cite{DS} of Dinh and Sibony and the theorem of Birkhoff-Perron-Frobenius (or the theorem of Lie-Kolchin type in \cite{KOZ}). Our first main result is

\begin{theorem} \label{thm1.2}
Let $X$ be a compact K\" ahler manifold of complex dimension $n$, and let $G$ be a connected solvable subgroup of the automorphism group ${\rm Aut}(X)$. Let
\[
N(G)=\{ g\in G\, |\, g\ \text{is\ of\ null\ entropy} \}.
\]
Then the following properties hold:
\begin{itemize}
\item[\rm (a)] $G/N(G)$ is a free abelian group of rank $r(G)\le n-1$. Furthermore, the rank estimate is optimal.
\item[\rm (b)] Let $h_k$ be the real dimension of the cohomology group ${H^{k,k}(X, {\bf R}})$. If $r(G)= n-1$, then $h_k$ satisfies
    \begin{equation}\label{eq1.1}
    h_k\ge \binom{n-1}{k},\quad 1\le k\le n-1.
    \end{equation}
    In addition, if $k$ divides $n-1$, then the lower bound of \eqref{eq1.1} can be improved by one, i.e., $h_k\ge \binom{n-1}{k}+1$.

\item[\rm (c)] Let $\bar{\mathcal K}$ be the closure of the K\" ahler cone $\mathcal{K}$. Then there exist $(r(G)+1)$ many non-zero classes $c_1, \ldots, c_{r(G)+1}$ in $\bar{\mathcal K}$ such that
    \[
    c_1\wedge c_2\wedge \cdots \wedge c_{r(G)+1}\ne 0.
    \]
\end{itemize}
\end{theorem}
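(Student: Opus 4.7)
The plan is to prove (a), (b), (c) together following the Dinh-Sibony method of \cite{DS} combined with the Birkhoff-Perron-Frobenius (or Lie-Kolchin type) theorem as in \cite{KOZ}. Throughout, view $G$ as acting on $V := H^{1,1}(X, {\bf R})$, preserving the closed convex cone $\bar{\mathcal K}$; its Zariski closure $\bar G$ in $GL(V)$ remains connected solvable and cone-preserving.

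First, applying Birkhoff-Perron-Frobenius to the action of $\bar G$ on $\bar{\mathcal K}$ yields a common eigenvector $c_1 \in \bar{\mathcal K}$ with positive eigenvalue character $\chi_1 : G \to {\bf R}_{>0}$. Iterating on suitable $\bar G$-invariant quotient cones (with careful lifting back to $V$ using convexity), I produce a maximal sequence $c_1, \ldots, c_{s+1} \in \bar{\mathcal K}$ of common $\bar G$-eigenvectors whose top wedge $c_1 \wedge \cdots \wedge c_{s+1}$ is a nonzero class in $H^{s+1, s+1}(X, {\bf R})$. For parts (a) and (c): nonvanishing of the top wedge forces $s + 1 \le n$, and when $s + 1 = n$ the top wedge lies in $H^{n,n}(X, {\bf R}) \cong {\bf R}$ on which $G$ acts trivially (biholomorphisms have degree one), yielding the relation $\prod_i \chi_i \equiv 1$. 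Thus $\Phi := (\log \chi_1, \ldots, \log \chi_{s+1}) : G \to {\bf R}^{s+1}$ takes values in a hyperplane and has rank at most $s \le n - 1$. Using that eigenvalues of $g^*$ on $H^2(X, {\bf Z})$ are algebraic integers, $\Phi(G)$ is a discrete subgroup, so $G/\ker\Phi$ is free abelian of rank at most $s$. The Dinh-Sibony-Zhang positivity argument then identifies $\ker \Phi$ with $N(G)$ and shows that maximality of the chain gives $r(G) = s$, proving (a) together with the classes $c_1, \ldots, c_{r(G)+1}$ required for (c). Optimality of the bound follows from $X = E^n$ with commuting hyperbolic automorphisms.

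For (b), assume $r(G) = n - 1$, so there exist $n$ common eigenvectors $c_0, \ldots, c_{n-1} \in \bar{\mathcal K}$ with $c_0 \wedge \cdots \wedge c_{n-1} \neq 0$, $\prod_i \chi_i \equiv 1$, and $\log \chi_1, \ldots, \log \chi_{n-1}$ linearly independent. For each $k$-subset $S \subset \{1, \ldots, n - 1\}$, the class $c_S := \bigwedge_{i \in S} c_i \in H^{k,k}(X, {\bf R})$ is nonzero (otherwise wedging with $\bigwedge_{i \notin S} c_i$ would contradict the nonvanishing top wedge) and is a $G$-eigenvector with character $\chi_S := \prod_{i \in S} \chi_i$. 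The characters $\chi_S$ for $S \subset \{1, \ldots, n-1\}$, $|S| = k$, are pairwise distinct by linear independence of $\log \chi_1, \ldots, \log \chi_{n-1}$; hence the $\binom{n-1}{k}$ eigenvector classes $c_S$ are linearly independent in $H^{k,k}$, giving $h_k \ge \binom{n-1}{k}$. The improvement $h_k \ge \binom{n-1}{k} + 1$ when $k \mid n - 1$ will be obtained by producing one additional eigenvector class in $H^{k,k}$, a natural candidate being a suitable wedge involving the remaining eigenvector $c_0$ whose character, via the divisibility hypothesis, can be combinatorially separated from the $\chi_S$ already produced.

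The principal obstacle is the first step: the iterated Birkhoff-Perron-Frobenius construction must preserve both $c_i \in \bar{\mathcal K}$ and the nonvanishing of the top wedge, and the identification $r(G) = s$ (i.e., the reverse inequality $r(G) \ge s$) requires the full Dinh-Sibony-Zhang volume/positivity argument to show $\ker \Phi = N(G)$. The extra class for (b) when $k \mid n - 1$ is a secondary technical difficulty and will require careful combinatorial analysis of the eigenvalue relations among the $\chi_S$.
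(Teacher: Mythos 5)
Your overall strategy (Lie--Kolchin/Birkhoff--Perron--Frobenius eigenvectors in $\bar{\mathcal K}$, logarithmic characters, discreteness of the image, identification of the kernel with $N(G)$) is the same as the paper's, but the two steps on which everything rests are left as black boxes, and one of them is mis-stated. First, discreteness of $\Phi(G)$ does not follow from the fact that the eigenvalues of $g^\ast$ on $H^2(X,{\bf Z})$ are algebraic integers: an algebraic integer close to $1$ can have conjugates of arbitrarily large modulus, so integrality alone gives nothing. What is needed --- and what constitutes essentially the whole content of Theorem \ref{thm3.1} --- is the positivity argument: for each individual $f$ one takes Birkhoff--Perron--Frobenius eigenvectors $\kappa_1,\kappa_2\in\bar{\mathcal K}$ with eigenvalues $\rho(f)$ and $\rho(f^{-1})^{-1}$, and applies the wedge lemma (Lemma 4.3 of Dinh--Sibony, Lemma \ref{lem3.1} here) along the classes $c_1,c_2,\dots$ to conclude $\rho(f)=\exp(\tau_l(f))$ for some $l$; only then are \emph{all} eigenvalues of $f^\ast$ bounded when the coordinates of $\Phi(f)$ are small, and only then does integrality of the characteristic polynomial give finiteness near the origin, hence discreteness, and simultaneously $\ker\Phi=N(G)$. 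You name this ``the Dinh--Sibony--Zhang positivity argument'' and call it the principal obstacle, but you do not carry it out, so (a) and (c) are not actually established.

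Second, your chain construction is not correct as stated. Iterating Birkhoff--Perron--Frobenius on ``quotient cones'' and ``lifting back'' does not produce classes $c_i\in\bar{\mathcal K}$ that are individually $G$-eigenvectors: in the paper one applies the Lie--Kolchin type theorem to the cone $c_1\wedge\cdots\wedge c_j\wedge\bar{\mathcal K}$ inside $c_1\wedge\cdots\wedge c_j\wedge H^{1,1}(X,{\bf R})$, and the class $c_{j+1}$ so obtained is in general \emph{not} an eigenvector --- only the wedge $c_1\wedge\cdots\wedge c_{j+1}$ is (this is the role of the auxiliary homomorphisms $\tilde\tau_{\tilde r+j}$ in Lemma \ref{lem3.2}(b)). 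The individual eigenvectors one can guarantee are only the $\tilde r$ classes attached to characters generating the image of $\pi$, and there it is Lemma \ref{lem3.1} (not ``maximality of a chain'') that forces $c_1\wedge\cdots\wedge c_{\tilde r}\ne 0$. Consequently your claim that ``maximality of the chain gives $r(G)=s$'' is unsupported, and your proof of (b) presupposes $n$ individual common eigenvectors $c_0,\dots,c_{n-1}$ in $\bar{\mathcal K}$ with nonzero top wedge and with $\log\chi_1,\dots,\log\chi_{n-1}$ independent --- exactly what is not available in general; the paper instead reduces (b) and (c) to Proposition 4.4 of Dinh--Sibony, using only the injectivity of $\Pi$ and the fact that its image cannot lie in finitely many hyperplanes. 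Finally, the improvement $h_k\ge\binom{n-1}{k}+1$ when $k\mid n-1$ is explicitly left as a ``natural candidate'' requiring further analysis, i.e.\ it is not proved at all.
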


It is clear that this theorem is a generalization of the result of Dinh and Sibony for the abelian automorphisms with positive entropy to the solvable automorphisms. The proofs of Theorem \ref{thm1.2} (a) and (b)--(c) will be given in Theorem \ref{thm3.1} and Proposition \ref{prop3.1} of Section \ref{sec3}, respectively.

In fact, in \cite{DS} Dinh and Sibony also proved that in case that $G$ is abelian and the rank $r(G)$ is equal to $n-1$, $N(G)$ is finite (Proposition 4.7 in \cite{DS}). Let ${\rm Aut}_0(X)$ denote the identity component of ${\rm Aut}(X)$ consisting of automorphisms homotopically equivalent to the identity. In a recent paper \cite{Zh10}, D.-Q. Zhang investigated the question of the finiteness of $N(G)$ for solvable automorphism groups $G$, and proved that if $r(G)=n-1=2$ and ${\rm Aut}_0(X)$ is trivial, then $N(G)$ is finite (Theorem 1.1 (3) in \cite{Zh10}). In this paper, we also consider the question of the finiteness of $N(G)$ for solvable automorphism groups $G$ and, as a consequence of Theorem \ref{thm1.1}, significantly extend the result by Zhang which holds only for the complex dimension equal to $3$. To be precise, our another main result of this paper which affirmatively answers to Question 2.18 in \cite{Zh} is:

\begin{theorem} \label{thm1.3}
Let $X$ be a compact K\" ahler manifold of complex dimension $n$, and let $G$ be a connected solvable subgroup of the automorphism group ${\rm Aut}(X)$. Assume that the rank $r(G)$ of the quotient group $G/N(G)$ is equal to $n-1$ and that ${\rm Aut}_0(X)$ is trivial. Then $N(G)$ is a finite set.
\end{theorem}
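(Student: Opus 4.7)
The strategy combines Theorem~\ref{thm1.2} with the classical Fujiki--Lieberman theorem on ${\rm Aut}(X)$, plus an argument in the spirit of Dinh--Sibony and Zhang to exclude unipotent null-entropy elements.

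First, by Theorem~\ref{thm1.2}(c) with $r(G)=n-1$, one obtains $n$ nonzero classes $c_1,\ldots,c_n\in\bar{\mathcal K}$ with $c_1\wedge\cdots\wedge c_n\ne 0$. Tracing the Birkhoff--Perron--Frobenius/Lie--Kolchin construction used in the proof of Theorem~\ref{thm1.2}, I may (after replacing $G$ by a finite-index subgroup, which does not affect the finiteness of $N(G)$) assume each $c_i$ is a common eigenvector of $G$ with positive real character $\chi_i\colon G\to\mathbb{R}_{>0}$. Since $g^*$ acts as the identity on $H^{n,n}(X,\mathbb{R})\cong\mathbb{R}$, comparing $g^*(c_1\wedge\cdots\wedge c_n)$ with $c_1\wedge\cdots\wedge c_n$ yields $\prod_i\chi_i(g)=1$ for every $g\in G$. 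For $g\in N(G)$ the condition $\rho(g)=1$ forces $\chi_i(g)\le 1$ for each $i$, and combined with the product relation gives $\chi_i(g)=1$ for all $i$. Hence every $g\in N(G)$ fixes each $c_i$ and, by multiplicativity of the characters, every product $c_I=\prod_{i\in I}c_i$ in $H^{|I|,|I|}(X,\mathbb{R})$.

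Next, by the Fujiki--Lieberman theorem, the kernel of $\rho\colon{\rm Aut}(X)\to GL(H^2(X,\mathbb{Z}))$ is a finite extension of ${\rm Aut}_0(X)$, hence finite by hypothesis. So it suffices to show that $\rho(N(G))$ is finite. Each element of $\rho(N(G))$ is an integer matrix of spectral radius $1$, so by Kronecker's theorem its eigenvalues are roots of unity, and some power is unipotent. If every such unipotent power is trivial, then $\rho(N(G))$ is a torsion subgroup of the finitely generated solvable linear group $\rho(G)\subset GL(H^2(X,\mathbb{Z}))$, and the Burnside--Schur theorem on torsion linear groups finishes the argument.

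The heart of the proof --- and the main obstacle --- is therefore to exclude non-trivial unipotent elements from $\rho(N(G))$. Such a unipotent $u$ must fix all $\binom{n}{p}$ classes $c_I$ in each $H^{p,p}(X,\mathbb{R})$, which places sharp constraints on its Jordan structure via the mixed Hodge--Riemann and Khovanskii--Teissier inequalities underlying Theorem~\ref{thm1.2}(b). The plan is to build $u$-invariant Lorentz-signature quadratic forms on $H^{1,1}(X,\mathbb{R})$ out of the co-degree-one products of the $c_i$, intersect their positive cones to confine $u$ to a compact group acting on $\operatorname{span}(c_1,\ldots,c_n)$, and then use integrality of $u$ to deduce that $u$ has finite order on $H^{1,1}(X,\mathbb{R})$, forcing $u=I$ there and, through compatibility with the Hodge decomposition together with ${\rm Aut}_0(X)=\{e\}$, on all of $H^2(X,\mathbb{Z})$. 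Executing this last propagation cleanly in every dimension is the principal technical difficulty, and amounts to combining Dinh--Sibony's polynomial-growth estimates for iterates of null-entropy automorphisms with the rigidity afforded by the maximal-rank hypothesis $r(G)=n-1$, thereby extending the three-dimensional argument of D.-Q.~Zhang~\cite{Zh10} to arbitrary $n$.
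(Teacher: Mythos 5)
Your opening step is fine and matches the paper's starting point: the construction behind Theorem \ref{thm1.2} produces classes $c_1,\dots,c_n\in\bar{\mathcal K}$ with $c_1\wedge\cdots\wedge c_n\ne 0$ that are common eigenvectors for $G$, and the product relation together with $\rho(g)=1$ forces every $g\in N(G)$ to fix each $c_i$ (no passage to a finite-index subgroup is needed, since $G$ is already connected solvable). The genuine gap is everything after that. You reduce the theorem to showing that $\rho(N(G))\subset GL(H^2(X,{\bf Z}))$ contains no nontrivial unipotent element, and you explicitly leave that step as a ``plan'' (Lorentz-signature forms, Khovanskii--Teissier, polynomial-growth estimates, ``the principal technical difficulty''). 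None of this is carried out, and it is precisely the hard point: null-entropy automorphisms can in general act by infinite-order unipotent transformations on $H^2$, and nothing in your sketch shows the hypotheses $r(G)=n-1$ and ${\rm Aut}_0(X)=\{\,{\rm id}\,\}$ exclude this. A secondary flaw: the Burnside--Schur step presumes $\rho(G)$ is finitely generated, which is not given; one should instead invoke Minkowski's theorem that torsion subgroups of $GL_m({\bf Z})$ are finite --- but this is moot while the unipotent step is missing.

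The paper closes the argument in one stroke, and this is the idea you miss: set $c=c_1+\cdots+c_n$. Then $f^*c=c$ for all $f\in N(G)$, and $c^n\ne 0$ since $c_1\wedge\cdots\wedge c_n\ne 0$ and all terms in the expansion of $c^n$ are products of classes in $\bar{\mathcal K}$, hence have nonnegative integral. By the Demailly--Paun characterization of the K\"ahler cone \cite{DP}, the class $c\in\bar{\mathcal K}$ with $c^n\ne 0$ is an honest K\"ahler class. Therefore $N(G)\subset{\rm Aut}_c(X)$, the stabilizer of the K\"ahler class $c$, and by Lieberman's theorem \cite{Li} the quotient ${\rm Aut}_c(X)/{\rm Aut}_0(X)$ is finite; since ${\rm Aut}_0(X)$ is trivial, ${\rm Aut}_c(X)$ is finite and hence so is $N(G)$. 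Note that this argument applies to $N(G)$ itself, not merely to its image in $GL(H^2(X,{\bf Z}))$, so the entire analysis of Kronecker eigenvalues, unipotent parts, and torsion linear groups that forms the unfinished core of your proposal is unnecessary; the compactness input you were trying to manufacture by hand is exactly what Lieberman's theorem supplies once a fixed K\"ahler class is in hand.
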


If $G$ is further assumed to be abelian in Theorem \ref{thm1.3}, it was shown in \cite{DS} that, even without the triviality of ${\rm Aut}_0(X)$, $N(G)$ is finite (Proposition 2.2 in \cite{Li}). However, it is known as in Example 4.5 of \cite{DS} that there is an abelian variety $X$ of complex dimension $n$ with an automorphism group $G$ such that $N(G)={\rm Aut}_0(X)\cong X$ and the rank $r(G)=n-1$.

We organize this paper as follows. In Section \ref{sec2}, we construct a homomorphism from the solvable subgroup of automorphisms to the abelian group ${\bf R}^m$. Here one of the key ingredients is the theorem of Birkhoff-Perron-Frobenius in \cite{Bir} (or Lie-Kolchin type in \cite{KOZ}). In Section \ref{sec3}, we give a detailed proof of Theorem \ref{thm1.2}.
Finally, we give a proof of Theorem \ref{thm1.3} in Section \ref{sec4}.

\section{Theorem of Birkhoff-Perron-Frobenius and its Applications} \label{sec2} 

The goal of this section is to set up some preliminary results to prove the main Theorem \ref{thm1.2}. One of the key ingredients is the theorem of Birkhoff-Perron-Frobenius in \cite{Bir} (or more generally the theorem of Lie-Kolchin type established in \cite{KOZ}).

Let $G$ be a solvable group and $\rho: G\to GL(V_{\bf C})$ be a complex linear representation of $G$. Let $Z$ be the Zariski closure of $\rho(G)$ in $GL(V_{\bf C})$, and let $Z_0$ be the connected component of the identity in $Z$. Let $G_0=\rho^{-1}(Z_0)$. Then the group $Z_0$ is conjugate to a group of upper triangular matrices whose determinant is non-zero. Let $N(G_0)$ be the subgroup of $G_0$ whose elements are defined by the statement that $f$ is an element of $N(G_0)$ if and only if all eigenvalues of $f^\ast$ on $V_{\bf C}$ are equal to $1$. Then $N(G_0)$ is a normal subgroup of $G_0$ and the abelian group $G_0/N(G_0)$ embeds into $({\bf C}^\ast)^{\dim V_{\bf C}}$. We shall denote by $G$ the group $G_0$, from now on.

Then we have the following lemma whose proof is simple (e.g., see Lemma 2.5 in \cite{KOZ}).

\begin{lemma} \label{lem2.1}
Let $Z_0$ be a connected solvable subgroup of $GL(V_{\bf C})$. Then the eigenvalues of every element of the commutator subgroup $[Z_0,Z_0]$ of $Z_0$ are all equal to $1$.
\end{lemma}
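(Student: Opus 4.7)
The plan is to exploit the triangularisation already recorded in the paragraph preceding the lemma: by the Lie--Kolchin theorem, the connected solvable subgroup $Z_0 \subset GL(V_{\bf C})$ is conjugate to a subgroup of the upper triangular matrices. Since eigenvalues are preserved by conjugation, I may assume without loss of generality that every element of $Z_0$ is upper triangular with respect to some fixed basis of $V_{\bf C}$.

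Given this reduction, the key computation is essentially a one-liner. For two upper triangular matrices $A,B \in Z_0$ with diagonal entries $(a_{11},\ldots,a_{nn})$ and $(b_{11},\ldots,b_{nn})$ respectively, the inverses $A^{-1}$ and $B^{-1}$ are again upper triangular with diagonal entries $(a_{ii}^{-1})$ and $(b_{ii}^{-1})$. Multiplying out, $[A,B] = ABA^{-1}B^{-1}$ is therefore upper triangular, and its $(i,i)$-entry equals $a_{ii}b_{ii}a_{ii}^{-1}b_{ii}^{-1} = 1$ for every $i$. Thus every basic commutator lies in the unipotent upper triangular group $U \subset GL(V_{\bf C})$.

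To finish, I need to pass from basic commutators to arbitrary elements of $[Z_0,Z_0]$. This is immediate because $U$ is a subgroup of $GL(V_{\bf C})$: products and inverses of upper triangular matrices whose diagonal entries are all $1$ are again of the same form. Consequently, the entire commutator subgroup $[Z_0,Z_0]$, being generated by basic commutators, is contained in $U$, and every element of $U$ is unipotent and hence has all eigenvalues equal to $1$.

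There is essentially no serious obstacle here: the entire content of the lemma is packaged into the Lie--Kolchin triangularisation that is already in force, after which the claim reduces to the elementary observation that the diagonal of a commutator of triangular matrices is identically $1$. The only point worth flagging is that one should verify (as above) that this unipotency property is closed under products, not merely under forming individual commutators.
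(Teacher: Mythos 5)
Your proof is correct and is essentially the standard argument the paper has in mind: the paper simply defers to Lemma 2.5 of \cite{KOZ}, whose proof is exactly the Lie--Kolchin triangularisation (already invoked in the paragraph before the lemma, where ``connected'' guarantees the Zariski closure is a connected solvable algebraic group) followed by the observation that commutators of upper triangular matrices are unipotent and that the unipotent upper triangular matrices form a subgroup. Nothing further is needed.
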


Since $G$ is solvable, there exists a derived series of $G$ as follows.
\[
G=G^{(0)} \triangleright G^{(1)} \triangleright G^{(2)} \triangleright \ \  \cdots \ \ \triangleright G^{(k)} \triangleright G^{(k+1)}=\{ {\rm id} \},
\]
where $G^{(i+1)}$ is a normal subgroup of $G^{(i)}$ and $G^{(i+1)}$ is the commutator subgroup $[G^{(i)}, G^{(i)}]$ of $G^{(i)}$ ($0\le i \le k$).  Let $A=G^{(k)}$. Then $A$ is an abelian subgroup of $G$, and clearly $A$ is a subset of $[G,G]$. So all the eigenvalues of $A$ are also equal to $1$ by Lemma \ref{lem2.1}.

Recall that if ${C}$ is a subset of a real vector space $V$, then  ${C}$ is said to be a \emph{strictly convex closed cone} of $V$ if $C$ is closed in $V$, closed under addition and multiplication by a non-negative scalar, and contains no 1-dimensional linear space. We then will need the following theorem of Birkhoff-Perron-Frobenius in \cite{Bir}.

\begin{theorem} \label{thm2.1}
Let $C$ be a nontrivial strictly convex closed cone of $V$ with non-empty interior in $V$. Then any element $g$ of $GL(V)$ such that $g(C)\subset C$ has an eigenvector $v_g$ in $C$ whose eigenvalue is the spectral radius of $g$ in $V$.
\end{theorem}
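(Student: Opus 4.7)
My proof plan is a two-step procedure: use Brouwer's fixed-point theorem to produce an eigenvector in $C$, and then identify its eigenvalue with the spectral radius $\rho(g)$ via duality.

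For the first step, strict convexity of $C$ together with non-emptiness of its interior guarantees that the dual cone $C^{*} \subset V^{*}$ also has non-empty interior. Fixing $\ell$ in the interior of $C^{*}$, the functional $\ell$ is strictly positive on $C \setminus \{0\}$, so the affine slice $K := \{v \in C : \ell(v) = 1\}$ is a non-empty, compact, convex subset of $V$. Because $g \in GL(V)$ and $g(C) \subset C$, the map $F : K \to K$, $F(v) = g(v)/\ell(g(v))$, is well-defined and continuous, so Brouwer's fixed-point theorem yields $v_{g} \in K$ with $g(v_{g}) = \lambda v_{g}$ and $\lambda = \ell(g v_{g}) > 0$. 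Iterating $g^{n}(v_{g}) = \lambda^{n} v_{g}$ immediately gives $\rho(g) \ge \lambda$.

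For the reverse inequality $\lambda \ge \rho(g)$, I would perform the same construction on the dual side: the transpose $g^{*} \in GL(V^{*})$ preserves the strictly convex cone $C^{*}$, so Brouwer again produces an eigen-covector $\ell_{g} \in C^{*}\setminus\{0\}$ with $g^{*}\ell_{g} = \mu \ell_{g}$ and $\mu > 0$. An arbitrary $v \in V$ decomposes as $v_{+} - v_{-}$ with $v_{\pm}\in C$ (possible because $C$ has non-empty interior, whence $V = C - C$), and hence $|\ell_{g}(g^{n} v)| \le \mu^{n}\,\ell_{g}(v_{+}+v_{-})$. Choosing $v$ so that $\|g^{n}v\|^{1/n} \to \rho(g)$ yields $\rho(g) \le \mu$. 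The pairing $\mu\, \ell_{g}(v_{g}) = \ell_{g}(g v_{g}) = \lambda\, \ell_{g}(v_{g})$ then forces $\lambda = \mu = \rho(g)$, \emph{provided} $\ell_{g}(v_{g}) \neq 0$.

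The main obstacle, as in all such fixed-point constructions, is precisely the non-degeneracy $\ell_{g}(v_{g}) \ne 0$: it could happen that the Brouwer-produced $v_{g}$ lies on a boundary face of $C$ annihilated by $\ell_{g}$, and different fixed points of $F$ may correspond to different cone-eigenvalues. To bypass this, one either restricts $g$ to a minimal non-trivial $g$-invariant face of $C$ before running Brouwer, or perturbs $g$ slightly to a cone-preserving $g_{\varepsilon}$ whose Brouwer output lies in the interior of $C$ (where $\ell_{g} > 0$), and then passes to the limit $\varepsilon \to 0$ using compactness of $K$. Making one of these fixes airtight is where the technical care concentrates; the rest of the proof is a routine combination of Brouwer's theorem and cone duality.
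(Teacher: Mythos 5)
The paper offers no proof of Theorem \ref{thm2.1}; it is quoted verbatim from Birkhoff's 1967 note, so there is nothing internal to compare against and your argument must stand on its own. Its first half does: the slice $K=\{v\in C:\ell(v)=1\}$ is compact because $C$ is closed and pointed and $\ell$ is strictly positive on $C\setminus\{0\}$, the map $F(v)=g(v)/\ell(g(v))$ is well defined since $g$ is invertible, and Brouwer gives an eigenvector $v_g\in C$ with eigenvalue $\lambda>0$ and $\rho(g)\ge\lambda$. But the entire content of the theorem beyond this is the identification $\lambda=\rho(g)$, and that is exactly where your argument has two gaps, only one of which you flag. The unflagged one: from $|\ell_g(g^nv)|\le\mu^n\,\ell_g(v_++v_-)$ you cannot conclude $\rho(g)\le\mu$, because a bound on the single scalar $\ell_g(g^nv)$ does not bound $\|g^nv\|$ when the fast-growing directions lie in $\ker\ell_g$. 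That step is valid only if $\ell_g$ lies in the \emph{interior} of $C^{*}$ (so that $\ell_g(w)\ge m\|w\|$ on $C$ for some $m>0$), and Brouwer applied to $C^{*}$ does not deliver an interior eigen-covector any more than it delivers an interior $v_g$. So the same degeneracy problem you identify for $\ell_g(v_g)\ne 0$ already sinks the inequality $\rho(g)\le\mu$.

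Of the two repairs you sketch, the first does not work as stated: for $g=\mathrm{diag}(2,1)$ on $\mathbf{R}^2$ with $C$ the positive quadrant, the minimal nontrivial $g$-invariant faces are the coordinate rays, and restricting to the $y$-axis produces the eigenvalue $1$, not $\rho(g)=2$; minimality of the face selects the wrong part of the spectrum. The perturbation route is the one that actually closes the argument: set $g_\varepsilon=g+\varepsilon\,u\otimes\ell$ with $u\in\mathrm{int}\,C$ and $\ell\in\mathrm{int}\,C^{*}$, so that $g_\varepsilon(C\setminus\{0\})\subset\mathrm{int}\,C$ and $g_\varepsilon^{*}(C^{*}\setminus\{0\})\subset\mathrm{int}\,C^{*}$; then both eigenvectors are automatically interior, both degeneracies disappear, your duality argument gives $g_\varepsilon v_\varepsilon=\rho(g_\varepsilon)v_\varepsilon$ with $v_\varepsilon\in K$, and letting $\varepsilon\to 0$ along a convergent subsequence (compactness of $K$, continuity of the spectral radius) yields $gv_0=\rho(g)v_0$ with $v_0\in C\setminus\{0\}$. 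Until one of these is written out, what you have proved is only that $g$ has \emph{some} eigenvector in $C$ with positive eigenvalue --- which is the easy part --- not the theorem.
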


In fact, if we make use of the subgroup $A$ of $[G,G]$ and Lemma \ref{lem2.1}, one can obtain a stronger version for connected solvable subgroups of $GL(V)$ as in \cite{KOZ} which is called the \emph{theorem of Lie-Kolchin type for a cone}. It plays an important role in the present paper. For more precise statement of Theorem \ref{thm2.2}, see Theorem 2.1 in \cite{KOZ}.

\begin{theorem} \label{thm2.2}
Let $V$ be an $r$-dimensional real vector space, and let $C\ne \{ 0 \}$ be a strictly convex closed cone of $V$. Let $G$ be a connected solvable subgroup of $GL(V)$ such that $G(C)\subset C$. Then there is a nonzero vector $v\in C\backslash\{ 0 \}$ which is invariant under $G$.
\end{theorem}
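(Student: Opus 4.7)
The plan is to establish Theorem~\ref{thm2.2} by combining Theorem~\ref{thm2.1} with an induction on the derived length of $G$, using Lemma~\ref{lem2.1} to control the commutator subgroup. Throughout I interpret ``$v$ is invariant under $G$'' in the Lie--Kolchin sense, namely that the ray ${\bf R}_{>0}\, v$ is $G$-stable and $gv = \chi(g)\, v$ for some positive character $\chi \colon G \to {\bf R}_{>0}$; a genuinely pointwise fixed vector is impossible whenever some $g \in G$ has spectral radius $>1$. Two preliminary reductions streamline the argument. First, replacing $V$ by $\operatorname{span}(C)$ (which is $G$-invariant, since $G(C) \subset C$), I may assume $C$ has nonempty interior in $V$. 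Second, replacing $G$ by its Zariski closure in $GL(V)$ (which remains connected solvable and still preserves the closed cone $C$), I may assume $G$ is Zariski closed.

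For the base case, assume $G$ is abelian. I would iteratively shrink $C$ using Theorem~\ref{thm2.1}. Pick $g_1 \in G$, set $\lambda_1 = \rho(g_1)$, and let $C_1 = C \cap \ker(g_1 - \lambda_1 \cdot \mathrm{id})$; by Theorem~\ref{thm2.1} this is a nontrivial strictly convex closed subcone, and since every $g \in G$ commutes with $g_1$ it preserves $\ker(g_1 - \lambda_1)$ and hence $C_1$. Iterating with further elements produces a descending chain whose ambient spans strictly decrease, so after finitely many steps they stabilize at some subspace $V_\infty$. A compactness argument on the compact convex body obtained by projectivizing $C \cap V_\infty$, using the finite intersection property of the family of closed invariant subcones indexed by the remaining $g \in G$, then singles out a ray in $C$ stable under all of $G$.

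For the inductive step, let $H = [G,G]$, which has strictly smaller derived length. By Lemma~\ref{lem2.1}, every element of $H$ has all eigenvalues equal to $1$, so $H$ acts unipotently. Applying the inductive hypothesis to $H$ acting on $C$ produces a nonzero $v_0 \in C$ with $h v_0 = \chi(h)\, v_0$ for some positive character $\chi$ of $H$; unipotence forces $\chi \equiv 1$, so $v_0$ lies in $V^H := \{v \in V : h v = v \text{ for all } h \in H\}$. Since $H$ is normal in $G$, the subspace $V^H$ is $G$-invariant, and $C' := C \cap V^H$ is a nontrivial strictly convex closed cone in $V^H$. The quotient $G/H$ is abelian and acts on $V^H$ preserving $C'$, so the base case applied to $G/H$ on $V^H$ with cone $C'$ yields the desired $G$-invariant ray.

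The main obstacle I anticipate lies in the compactness step of the abelian base case: when $G$ is not finitely generated, one cannot intersect only finitely many invariant subcones, and one must show that the family $\{C \cap \ker(g - \rho(g))\}_{g \in G}$, interpreted inside the stabilized common span, has the finite intersection property on the compact projectivization of $C \cap V_\infty$. A closely related delicate point is fitting the various spectral radii $\rho(g)$ into a well-defined positive homomorphism $\chi \colon G \to {\bf R}_{>0}$ on the eventual common eigendirection; this is where the connectedness of $G$ in the Zariski sense is essential, ruling out the sign or branching ambiguities that would otherwise obstruct the multiplicativity of $\chi$.
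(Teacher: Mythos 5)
The paper offers no proof of Theorem~\ref{thm2.2} --- it simply cites Theorem~2.1 of \cite{KOZ} --- and your overall architecture (induction on derived length, Birkhoff--Perron--Frobenius for the abelian case, unipotence of the derived subgroup via Lemma~\ref{lem2.1}, and the ``eigenray'' reading of invariance as in Corollary~\ref{cor2.1}) matches the proof given there. However, one of your two preliminary reductions is genuinely false: the Zariski closure of $G$ need \emph{not} preserve $C$. The set $\{g\in GL(V):g(C)\subseteq C\}$ is closed in the Euclidean topology but not in the Zariski topology. Concretely, take $V={\bf R}^2$, $C$ the closed first quadrant, and $G$ the cyclic group generated by ${\rm diag}(2,3)$: since $\log 2/\log 3$ is irrational there is no relation $2^i3^j=1$, so the Zariski closure of $G$ is the whole diagonal torus $({\bf R}^\ast)^2$, which contains ${\rm diag}(-1,1)$ and does not preserve $C$. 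This is not cosmetic, because you invoke the reduction precisely to make $H=[G,G]$ connected so that the inductive hypothesis applies to it. The gap is repairable in two ways: (i) prove directly that $\overline{[G,G]}=[\overline G,\overline G]$ (the commutator map sends the Zariski-dense subset $G\times G$ of $\overline G\times\overline G$ into the closed group $\overline{[G,G]}$, giving $[\overline G,\overline G]\subseteq\overline{[G,G]}\subseteq[\overline G,\overline G]$, and the derived group of a connected group is connected); or (ii) follow \cite{KOZ} and induct using the \emph{last} nontrivial term $A=G^{(k)}$ of the derived series, which is abelian and consists of unipotent elements by Lemma~\ref{lem2.1}, so that the abelian case plus unipotence yields a common \emph{fixed} vector in $C$ with no connectedness hypothesis on $A$ at all.

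The abelian base case is correct in outline, and the obstacle you flag can be closed without any compactness or finite-intersection argument. For each finite ordered tuple $F=(g_1,\dots,g_k)$ in $G$ the iterated Birkhoff--Perron--Frobenius construction yields a nonzero closed subcone $C_F\subseteq C$ of common eigenvectors of $g_1,\dots,g_k$; choose $F_0$ with $\dim{\rm span}(C_{F_0})$ minimal and set $V_\infty={\rm span}(C_{F_0})$. For any further $g\in G$ the cone $C_{F_0}\cap\ker\bigl(g|_{V_\infty}-\rho(g|_{V_\infty})\,{\rm id}\bigr)$ is nonzero, and by minimality its span has dimension at least $\dim V_\infty$, hence equals $V_\infty$; thus $g$ acts on $V_\infty$ as the scalar $\rho(g|_{V_\infty})$ and any nonzero $v\in C_{F_0}$ is the desired common eigenvector. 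Note that at each stage the relevant eigenvalue is the spectral radius of $g_i$ \emph{restricted to the current span}, not on all of $V$; this is exactly why the family $\{C\cap\ker(g-\rho(g)\,{\rm id})\}_{g\in G}$, as you literally wrote it, could fail the finite intersection property. Finally, your worry about assembling $\chi$ is unfounded: once a common eigenvector $v$ exists, $gv=\chi(g)v$ forces $\chi$ to be a homomorphism, and $\chi(g)>0$ follows from $gv\in C$, $v\in C$, $g$ invertible, and $C\cap(-C)=\{0\}$; no connectedness enters there.
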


In this paper we shall apply the above discussion to a solvable subgroup $G$ of $\text{Aut}(X)$ acting on
\[
V_{\bf C}=H^{1,1}(X, {\bf R})\otimes {\bf C}.
\]
Then, in particular, we easily see that all the eigenvalues of $[G,G]$ are equal to $1$ (i.e., every element of $[G,G]$ is \emph{unipotent}). Since $A$ is a subset of $[G,G]$, it is also obvious that all the eigenvalues of every element of $A$ are equal to $1$ and so every element of $A$ is of null entropy.

In order to state and prove Corollary \ref{cor2.1}, we need to introduce more notations. Namely, when $X$ is a compact connected K\" ahler manifold of complex dimension $n$ as before, let ${\mathcal K}$ denote the K\" ahler cone in the Dolbeault cohomology group $H^{1,1}(X, {\bf R})$. So ${\mathcal K}$ is the cone of strictly positive smooth $(1,1)$-forms in $H^{1,1}(X, {\bf R})$, and it is a strictly convex open cone in $H^{1,1}(X, {\bf R})$ whose closure $\bar{\mathcal K}$ is also a strictly convex closed cone such that $\bar{\mathcal K}\cap - \bar{\mathcal K}=\{ 0 \}$.

With these understood, we can obtain the following easy corollary which will play an important role in Theorem \ref{thm3.1}.

\begin{cor} \label{cor2.1}
Let $G$ be a connected solvable group of automorphisms of a compact K\" ahler manifold. Assume that $G$ contains an element $f$ of positive entropy. Then there exist a non-zero class $c$ in $\bar{\mathcal K}$ invariant under $G$ and a positive real number $\chi(f)$ so that
\begin{equation*}
f^\ast (c)= \chi(f) c\quad \text{and}\quad \rho(f)\ge \chi(f).
\end{equation*}
\end{cor}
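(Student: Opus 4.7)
\medskip

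\noindent\textbf{Proof plan for Corollary \ref{cor2.1}.}

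The plan is to apply the Lie--Kolchin type theorem for a cone (Theorem \ref{thm2.2}) directly to the action of $G$ on the K\"ahler cone. First, I would observe that every automorphism of $X$ pulls back K\"ahler classes to K\"ahler classes, so each $g \in G$ satisfies $g^{\ast}(\mathcal{K}) = \mathcal{K}$, and hence $g^{\ast}(\bar{\mathcal{K}}) \subset \bar{\mathcal{K}}$ by continuity. This lets us regard $G$ as a connected solvable subgroup of $GL(V)$, with $V = H^{1,1}(X,\mathbf{R})$, that preserves the strictly convex closed cone $\bar{\mathcal{K}}$.

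Next, I would apply Theorem \ref{thm2.2} (in its precise form from Theorem 2.1 of \cite{KOZ}, as noted after the statement) to $V$ and $C = \bar{\mathcal{K}}$. This produces a nonzero class $c \in \bar{\mathcal{K}} \setminus \{0\}$ which is a common eigenvector for all of $G$, i.e., for each $g \in G$ there is a scalar $\chi(g)$ with $g^{\ast}(c) = \chi(g)\,c$. The scalar is real because $G$ acts on the real vector space $V$, and it is strictly positive because $c \in \bar{\mathcal{K}} \setminus \{0\}$ and $\bar{\mathcal{K}}$ is strictly convex (so $\bar{\mathcal{K}} \cap (-\bar{\mathcal{K}}) = \{0\}$), while the invertibility of $g^{\ast}$ rules out $\chi(g) = 0$.

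Finally, specializing to the distinguished element $f$ of positive entropy, we get $f^{\ast}(c) = \chi(f)\,c$ with $\chi(f) > 0$. Since $\chi(f)$ is an eigenvalue of the linear map $f^{\ast}|_{H^{1,1}(X,\mathbf{C})}$, its absolute value is at most the spectral radius; that is, $\rho(f) \ge |\chi(f)| = \chi(f)$, which completes the corollary.

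There is no substantive obstacle: the only conceptual point is to recognize that the ``invariance'' in Theorem \ref{thm2.2} is invariance of the ray through $c$ (common eigenvector), as made precise in \cite{KOZ}; after that, the positivity of $\chi(g)$ and the bound $\rho(f) \ge \chi(f)$ are immediate from the strict convexity of $\bar{\mathcal{K}}$ and from the definition of spectral radius.
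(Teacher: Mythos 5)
Your proposal is correct and follows essentially the same route as the paper: take $C=\bar{\mathcal K}$ and apply the Lie--Kolchin type theorem (Theorem \ref{thm2.2}) to obtain a common eigenvector $c\in\bar{\mathcal K}\setminus\{0\}$ for $G$. In fact you supply more detail than the paper's own proof, which omits the (easy) justifications that $\chi(f)>0$ and that $\rho(f)\ge\chi(f)$; your arguments for those points are sound.
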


\begin{proof}
To prove it, we simply take ${C}=\bar{\mathcal K}$. Then  it follows from Theorem \ref{thm2.2} of Lie-Kolchin type (or Theorem 2.1 of \cite{KOZ}) that there exists a common non-zero eigenvector $c\in {C}= \bar{\mathcal K}$ for $G$. Hence we can write
\[
f^\ast(c)=\chi(f) c,\quad \chi(f)\in {\bf R}.
\]
So we are done.
\end{proof}

To give a proof of Theorem \ref{thm1.2}, we need to set up some more terminology.

\begin{defn}
Let $\tau=( \tau(f) )_{f\in G}\in {\bf R}^{G}$, and let $\Gamma_\tau$ be the cone of classes $c$ in $\bar{\mathcal K}$ such that
\[
f^\ast (c)= \exp( \tau(f) ) c
\]
for all $f\in G$.
\end{defn}

Let
\[
F=\{ \tau\in {\bf R}^G\, |\, \Gamma_\tau \ne \{ 0 \}\}.
\]
If $\Gamma_\tau\ne \{0 \}$ then $\exp( \tau(f) )$ is an eigenvalue of $f^\ast$ on $V=H^{1,1}(X, {\bf R})$. Since $V$ is finite dimensional, $F$ must be finite. Let $\tau_1, \tau_2, \cdots, \tau_m$ be all the elements of a finite set $F$. If we define a map
\begin{equation*}
\pi: G \to {\bf R}^m,\quad f\mapsto (\tau_1(f), \tau_2(f),\ldots, \tau_m(f)),
\end{equation*}
then we can show the following lemma:

\begin{lemma} \label{lem2.2}
\begin{itemize}
\item[\rm (a)] The integer $m$ satisfies the inequality
\[
m\le h_1=\dim H^{1,1}(X, {\bf R}).
\]

\item[\rm (b)] The map $\pi$ is always a homomorphism into the abelian group $({\bf R}^m, +)$. In particular, the image $\pi(G)$ is also abelian.
\end{itemize}
\end{lemma}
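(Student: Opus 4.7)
The plan is to exploit the defining property of each $\tau_i\in F$: by construction there is a non-zero class $c_i\in \Gamma_{\tau_i}\subset \bar{\mathcal K}$ that is a joint eigenvector for the whole action of $G$ on $V=H^{1,1}(X,{\bf R})$, with
\[
f^\ast(c_i)=\exp(\tau_i(f))\,c_i\quad\text{for every } f\in G.
\]
Both parts of the lemma will follow by working with the family $\{c_1,\ldots,c_m\}$ inside $V$.

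For part (a) I would show that $c_1,\ldots,c_m$ are linearly independent in $V$, which immediately yields $m\le \dim V=h_1$. This is a standard Dedekind/character-independence argument. Assume a minimal nontrivial dependence $\sum_{i\in S}a_ic_i=0$ with all $a_i\ne 0$; then $|S|\ge 2$ since each $c_i\ne 0$. Pick distinct $i_0,i_1\in S$; since $\tau_{i_0}\ne \tau_{i_1}$ as elements of ${\bf R}^G$, there is $f\in G$ with $\tau_{i_0}(f)\ne \tau_{i_1}(f)$. Applying $f^\ast$ to the dependence and subtracting $\exp(\tau_{i_0}(f))$ times the original relation produces a strictly shorter nontrivial dependence, since the new coefficient of $c_{i_1}$ equals $a_{i_1}\bigl(\exp(\tau_{i_1}(f))-\exp(\tau_{i_0}(f))\bigr)\ne 0$. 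This contradicts the minimality of $|S|$.

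For part (b) it suffices to verify the additivity $\tau_i(fg)=\tau_i(f)+\tau_i(g)$ for each $i$ and all $f,g\in G$. Using the contravariant functoriality of pullback on cohomology and the fact that $c_i$ is an eigenvector for each individual pullback, I compute
\[
(fg)^\ast(c_i)=g^\ast\bigl(f^\ast(c_i)\bigr)=\exp(\tau_i(f))\cdot g^\ast(c_i)=\exp\bigl(\tau_i(f)+\tau_i(g)\bigr)\,c_i.
\]
Comparing this with the defining relation $(fg)^\ast(c_i)=\exp(\tau_i(fg))\,c_i$ and using $c_i\ne 0$ yields the claimed additivity. Hence $\pi(fg)=\pi(f)+\pi(g)$, so $\pi$ is a homomorphism into $({\bf R}^m,+)$ and its image is abelian.

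The main subtlety lies in part (a): since $G$ is only assumed solvable, one cannot appeal to simultaneous diagonalisability of the family $\{f^\ast\}_{f\in G}$ on $V$. However, this is not needed, because the $c_i$ are joint eigenvectors \emph{by construction}, associated to pairwise distinct real characters $\chi_i=\exp\circ\tau_i$ of $G$; the classical linear-independence-of-characters argument outlined above then applies verbatim without requiring any further structural hypothesis on $G$.
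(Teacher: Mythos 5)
Your proposal is correct. Part (b) is essentially identical to the paper's proof: both compute $(f\circ g)^\ast c_i = g^\ast f^\ast c_i = \exp(\tau_i(f)+\tau_i(g))c_i$ and compare with the defining relation for $\Gamma_{\tau_i}$. For part (a) you take a genuinely different route. The paper argues that, since the $\tau_i$ are pairwise distinct, there is a \emph{single} element $f_0\in G$ with $\tau_i(f_0)\ne\tau_j(f_0)$ for all $i<j$, so that $f_0^\ast$ has $m$ distinct eigenvalues on $V$ and hence $m\le\dim V$. You instead prove directly that the joint eigenvectors $c_1,\dots,c_m$ are linearly independent via the classical Dedekind minimal-dependence argument, which only requires, for each \emph{individual} pair $(i_0,i_1)$, some $f$ separating $\tau_{i_0}$ from $\tau_{i_1}$ --- and that is immediate from the distinctness of the $\tau_i$ as elements of ${\bf R}^G$. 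Your version is arguably the more careful one: the paper's simultaneous-separation claim is asserted without justification, and strictly speaking it needs an extra step (e.g., that $G$ cannot be the union of the finitely many proper subgroups $\ker(\tau_i-\tau_j)$, each of which has infinite index since it is the kernel of a nontrivial homomorphism to ${\bf R}$, so B.~H.~Neumann's lemma applies --- or, after part (b), a density argument on the subgroup $\pi(G)\subset{\bf R}^m$). Both approaches deliver the same bound $m\le h_1$; yours buys a self-contained argument at the cost of a slightly longer computation, while the paper's is shorter but leans on an unproved separation statement.
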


\begin{proof}
For the proof of (a), note that, since $\tau_1, \tau_2, \cdots, \tau_m$ are all distinct, there exists an element $f_0\in G$ such that $\tau_i(f_0)\ne \tau_j(f_0)$ for $1\le i< j\le m$. Thus $f^\ast_0$ on $V$ has at most $m$ distinct eigenvalues. Hence $m$ is less than or equal to the dimension of $V$ that is $h_1$.

For the proof of (b), it suffices to prove that $\pi$ is a homomorphism. To do so, for $c_i\in \Gamma_{\tau_i}$ note that
\[
(f\circ g)^\ast c_i = \exp (\tau_i(f) + \tau_i(g)) c_i.
\]
This implies that $\pi(f\circ g)= \pi(f)+\pi(g)$, so $\pi$ is a group homomorphism. This completes the proof of Lemma \ref{lem2.2}.
\end{proof}

\section{Rank Estimate: Proof of Theorem \ref{thm1.2}} \label{sec3} 

In this section we give a proof of Theorem \ref{thm1.2}. Indeed, the proof of this section is essentially an adaptation of the proof of the Principal Theorem by Dinh and Sibony in \cite{DS}. For the sake of reader's convenience, we shall show how to prove Theorem \ref{thm1.2} relatively in detail. See \cite{DS} for more details.

First we need some preliminary lemma from the paper of Dinh and Sibony in \cite{DS}. Assume that $X$ is a compact K\" ahler manifold of dimension $n$, as before.

\begin{lemma} \label{lem3.1}
Let $c, c'$, and $c_i$ be the non-zero classes in $\bar{\mathcal K}$, $1\le i \le t\le n-2$. Assume that for $f\in \text{\rm Aut}(X)$ there exist two different positive real constants $\lambda$ and $\lambda'$ such that
\begin{equation*}
\begin{split}
f^\ast(c_1\wedge \cdots \wedge c_t\wedge c) &=\lambda c_1 \wedge \cdots \wedge c_t \wedge c,\\
f^\ast(c_1\wedge \cdots \wedge c_t\wedge c') &=\lambda' c_1 \wedge \cdots \wedge c_t \wedge c'.
\end{split}
\end{equation*}
Assume also that $c_1 \wedge \cdots \wedge c_t \wedge c\ne 0$ and $c_1 \wedge \cdots \wedge c_t \wedge c\wedge c'=0$. Then we have
\[
c_1 \wedge \cdots \wedge c_t \wedge c'=0.
\]
\end{lemma}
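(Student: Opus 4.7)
I would argue by contradiction. Set $\Omega := c_1 \wedge \cdots \wedge c_t$ and assume $\beta := \Omega \wedge c' \ne 0$. With $\alpha := \Omega \wedge c$, one has two non-zero nef classes $\alpha, \beta \in H^{t+1,t+1}(X,{\bf R})$ which are $f^*$-eigenvectors with the distinct positive eigenvalues $\lambda$ and $\lambda'$. By commutativity of the wedge on even-degree classes, the hypothesis $\alpha \wedge c' = 0$ is the same as $\beta \wedge c = 0$.

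The first step is to propagate these vanishings along the orbit of $f$. Since $f^*\alpha = \lambda\alpha$ with $\lambda \ne 0$, applying $(f^k)^*$ to $\alpha \wedge c' = 0$ gives
\[
\alpha \wedge (f^k)^* c' = 0, \qquad \beta \wedge (f^k)^* c = 0 \qquad \text{for every } k \in {\bf Z}.
\]

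The second step is to pass to $f^*$-eigenvectors that live in $\bar{\mathcal K}$. Using the theorem of Birkhoff--Perron--Frobenius (Theorem~\ref{thm2.1}) and the positivity of $\lambda, \lambda'$, I would take a normalized-limit argument on the nef sequences $(\lambda')^{-k}(f^k)^* c'$ and $\lambda^{-k}(f^k)^* c$ (along suitable subsequences, with Ces\`aro averaging if necessary to smooth out possible Jordan block degeneracies of $f^*$ acting on $H^{1,1}(X,{\bf R})$), producing non-zero classes $\tilde c, \tilde c' \in \bar{\mathcal K}$ which are $f^*$-eigenvectors with respective eigenvalues $\lambda, \lambda'$ and with $\Omega \wedge \tilde c$ and $\Omega \wedge \tilde c'$ both non-zero (the latter because $\beta \ne 0$ by assumption). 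Passing to the limit in the iterated identities yields $\Omega \wedge \tilde c \wedge \tilde c' = 0$.

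The final step is a Hodge--Riemann signature argument. Fix a K\"ahler class $\omega$ and consider the symmetric bilinear form
\[
B(u, v) := \int_X c_1 \wedge \cdots \wedge c_t \wedge u \wedge v \wedge \omega^{n-t-2}
\]
on $H^{1,1}(X, {\bf R})$. After approximating each $c_i$ by the K\"ahler class $c_i + \varepsilon\omega$ and letting $\varepsilon \to 0^+$, the mixed Hodge--Riemann bilinear relations (equivalently, the Khovanskii--Teissier inequality together with the Hodge index theorem) imply that $B$ has at most one positive direction modulo its kernel; and $B(\omega,\omega) > 0$ shows this direction is realized. Since $\tilde c$ and $\tilde c'$ are linearly independent (distinct eigenvalues) nef classes with $B(\tilde c, \tilde c') = 0$ and $B(\tilde c', \tilde c') \ge 0$, a signature analysis forces $\tilde c'$ into $\ker B$. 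This gives $\Omega \wedge \tilde c' \wedge \omega^{n-t-2} = 0$ in $H^{n-1, n-1}$; pairing with $\omega$ yields $\int_X \Omega \wedge \tilde c' \wedge \omega^{n-t-1} = 0$, which forces the non-zero nef class $\Omega \wedge \tilde c'$ to vanish, contradicting $\beta \ne 0$.

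\textbf{Main obstacle.} The delicate step is the second one: extracting $\tilde c, \tilde c' \in \bar{\mathcal K}$ as actual $f^*$-eigenvectors via the renormalized-limit procedure. The positivity and distinctness of $\lambda$ and $\lambda'$ are both essential, since they guarantee both the convergence of the limits (by separating dominant eigencomponents) and the non-vanishing of $\Omega \wedge \tilde c'$, and the same distinctness then drives the final Hodge-index contradiction.
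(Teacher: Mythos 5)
There is a genuine gap, and it sits exactly where you flagged it: step 2. The hypotheses only make the wedges $\Omega\wedge c$ and $\Omega\wedge c'$ eigenvectors of $f^*$ on $H^{t+1,t+1}(X,{\bf R})$; the classes $c,c'$ themselves are not eigenvectors, $\Omega$ is not assumed $f^*$-invariant, and $\lambda,\lambda'$ need not be eigenvalues of $f^*$ on $H^{1,1}(X,{\bf R})$ at all. So there is no reason the renormalized sequences $\lambda^{-k}(f^k)^*c$, $(\lambda')^{-k}(f^k)^*c'$ (even after Ces\`aro averaging or passing to subsequences) converge to non-zero classes, let alone to nef $f^*$-eigenvectors with eigenvalues $\lambda,\lambda'$; Birkhoff--Perron--Frobenius only produces an eigenvector in $\bar{\mathcal K}$ for the spectral radius, not for a prescribed eigenvalue. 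The claim that the limits satisfy $\Omega\wedge\tilde c\ne 0$, $\Omega\wedge\tilde c'\ne 0$ is likewise unsupported (wedging is not injective in the limit), and even the bookkeeping is off: propagating $\Omega\wedge c\wedge(f^k)^*c'=0$ and $\Omega\wedge c'\wedge(f^k)^*c=0$ would at best give $\Omega\wedge c\wedge\tilde c'=0$ and $\Omega\wedge c'\wedge\tilde c=0$, not the $\Omega\wedge\tilde c\wedge\tilde c'=0$ you need. Your final step also overreaches: without knowing $B(\tilde c,\tilde c)>0$ (and $\int_X\Omega\wedge\tilde c\wedge\tilde c\wedge\omega^{n-t-2}$ can vanish even when $\Omega\wedge\tilde c\ne 0$), a form with at most one positive direction does not force $\tilde c'$ into $\ker B$; two orthogonal isotropic nonnegative vectors are only forced to be proportional modulo the kernel.

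The repair is to drop steps 1--2 entirely and apply the Hodge-theoretic input directly to the original classes. The correct intermediate statement (Corollaire 3.3 of \cite{DS}, proved there via the mixed Hodge--Riemann/Hodge index theorem for products of nef classes, with the K\"ahler approximation $c_i+\varepsilon\omega$ and limiting argument done carefully) is: for nef classes, $c_1\wedge\cdots\wedge c_t\wedge c\wedge c'=0$ implies that $c_1\wedge\cdots\wedge c_t\wedge c$ and $c_1\wedge\cdots\wedge c_t\wedge c'$ are proportional. Granting this, the lemma is immediate: if $c_1\wedge\cdots\wedge c_t\wedge c'\ne 0$, proportionality of the two non-zero eigenvectors contradicts $\lambda\ne\lambda'$. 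This is exactly the proof of Lemma 4.3 of \cite{DS}, which is all the present paper invokes (it does not reprove the lemma); so the distinctness of the eigenvalues enters only at this last, elementary stage, while the real content is the nef proportionality statement, which your argument neither proves nor correctly reduces to.
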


\begin{proof}
This lemma has nothing to do with a solvable subgroup of automorphisms of $X$ and will play an essential role in the proof of Lemma \ref{lem3.2} and Theorem \ref{thm3.1}. See Lemma 4.3 of \cite{DS}.
\end{proof}

As the proof of Theorem \ref{thm3.1} below shows, if the rank $\tilde r$ of the image of the homomorphism $\pi$ is greater than or equal to $n-1$, it will be enough to use the homomorphism $\pi$ in order to prove Theorem \ref{thm1.2}. However, it turns out that if $\tilde r$ is less than or equal to $n-2$, we need some more homomorphisms from $G$ to ${\bf R}$ other than $\tau_i$'s, in order to obtain an injective homomorphism from $G/N(G)$. The following lemma provides such additional homomorphisms. Here we adapt a variation of some arguments originated from \cite{DS} (see also \cite{Zh}).

\begin{lemma} \label{lem3.2}
Let $\tilde r$ denote the rank of the image of the homomorphism $\pi$. Then the following properties hold:

\begin{itemize}
\item[\rm (a)] There exist non-zero classes $c_j$ $(j=1,2,\cdots, \tilde r)$ in $\bar{\mathcal K}$, invariant under $G$, such that
\[
c_1\wedge c_2\wedge \cdots \wedge c_{\tilde r}\ne 0.
\]

\item[\rm (b)] Assume that $\tilde r\le n-2$. Then there exist additional non-zero classes $c_{\tilde r+j}$ in $\bar{\mathcal K}$, not necessarily invariant under $G$, and homomorphisms $\tilde\tau_{\tilde r+j}:G\to {\bf R}$ $(j=1,\cdots, n-\tilde r-1)$ such that
    \[
    c_1\wedge c_2\wedge\cdots\wedge c_{n-2}\wedge c_{n-1}\ne 0
    \]
    and such that for all $f\in G$
    \begin{equation} \label{eq3.1}
    \begin{split}
    & f^\ast (c_1\wedge c_2\wedge \cdots \wedge c_{\tilde r}\wedge c_{\tilde r+1}\wedge \cdots \wedge c_{\tilde r +j})\\
    &= \exp({\tau_1(f)})\cdots \exp({\tau_{\tilde r}(f)})\exp({\tilde\tau_{\tilde r+1}(f)})\cdots \exp({\tilde\tau_{\tilde r +j}(f)})\\
    &\cdot c_1\wedge c_2\wedge \cdots \wedge c_{\tilde r}\wedge c_{\tilde r+1}\wedge \cdots \wedge c_{\tilde r +j}.
    \end{split}
    \end{equation}
\end{itemize}
\end{lemma}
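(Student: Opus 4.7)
The plan is to exploit the eigencones $\Gamma_\tau$ introduced in Section~\ref{sec2} together with the Lie--Kolchin type Theorem~\ref{thm2.2}, using Lemma~\ref{lem3.1} as the mechanism to keep wedge products from collapsing.

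For part (a), since $\pi(G)$ has rank $\tilde r$ in $\mathbf{R}^m$, I can select $\tilde r$ characters $\tau_1,\ldots,\tau_{\tilde r}$ from $F$ that are $\mathbf{R}$-linearly independent on $G$. Each $\Gamma_{\tau_j}$ is a nonempty strictly convex closed subcone of $\bar{\mathcal K}$ on which every $f\in G$ acts by the scalar $\exp(\tau_j(f))$, so every nonzero ray in $\Gamma_{\tau_j}$ is automatically $G$-invariant. It remains to choose representatives $c_j\in\Gamma_{\tau_j}\setminus\{0\}$ with $c_1\wedge\cdots\wedge c_{\tilde r}\ne 0$, which I would do by induction on $t$: given $c_1\wedge\cdots\wedge c_t\ne 0$, pick any $c'\in\Gamma_{\tau_{t+1}}\setminus\{0\}$ and, if the extended wedge vanishes, use Lemma~\ref{lem3.1} with progressively shorter prefixes. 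Because $\tau_{t+1}$ is independent of $\tau_1,\ldots,\tau_t$, there is some $f_0\in G$ for which $\exp(\tau_{t+1}(f_0))$ differs from $\exp(\tau_k(f_0))$ for the relevant index $k$; Lemma~\ref{lem3.1} then forces a shorter wedge to vanish, and iterating drives the argument down to $c'=0$, a contradiction.

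For part (b), assuming $\tilde r\le n-2$, I would produce $c_{\tilde r+1},\ldots,c_{n-1}$ one at a time by repeated application of Theorem~\ref{thm2.2}. Suppose $c_1,\ldots,c_{\tilde r+j-1}$ have been built so that (\ref{eq3.1}) holds up to index $\tilde r+j-1$ and their wedge is nonzero. Let $V_j$ be the linear span in $H^{\tilde r+j,\tilde r+j}(X,\mathbf{R})$ of the classes $c_1\wedge\cdots\wedge c_{\tilde r+j-1}\wedge c$ with $c\in H^{1,1}(X,\mathbf{R})$, and let $C_j\subset V_j$ be the closure of the subcone obtained by restricting to $c\in\bar{\mathcal K}$. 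The prefix $c_1\wedge\cdots\wedge c_{\tilde r+j-1}$ is a common $G$-semi-invariant by construction, and $G$ preserves $\bar{\mathcal K}$, so $G$ preserves $C_j$. Strict convexity of $C_j$ follows from its containment in the $(\tilde r+j)$-fold wedge of $\bar{\mathcal K}$ with itself, which meets its negative only at $0$; non-empty interior in $V_j$ follows by wedging with a K\"ahler class $\omega\in\mathcal K$. Theorem~\ref{thm2.2} then produces a nonzero common $G$-eigenvector in $C_j$, which can be written as $c_1\wedge\cdots\wedge c_{\tilde r+j-1}\wedge c_{\tilde r+j}$ for some $c_{\tilde r+j}\in\bar{\mathcal K}$, and the scalar by which $f^*$ acts defines $\tilde\tau_{\tilde r+j}(f)$ after dividing out the previously accumulated characters. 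Iterating for $j=1,\ldots,n-\tilde r-1$ yields the required $c_1,\ldots,c_{n-1}$.

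The main obstacle I expect is in part (a): the recursive use of Lemma~\ref{lem3.1} requires careful bookkeeping, because when the extended wedge with $c'$ vanishes one must descend through all shorter prefixes, and independence of the characters must be parlayed into an element of $G$ that separates the two eigenvalues at each descent. A secondary technical point, in part (b), is verifying that the Lie--Kolchin eigenvector in $C_j$ can genuinely be represented as a wedge $c_1\wedge\cdots\wedge c_{\tilde r+j-1}\wedge c_{\tilde r+j}$ with $c_{\tilde r+j}\in\bar{\mathcal K}$, rather than as a limit lying in the closure but outside the image of the wedge map; this should be salvaged either by working with the image cone and using that $\bar{\mathcal K}$ is closed and locally compact modulo scaling, or by recovering $c_{\tilde r+j}$ from a convergent rescaling of preimages.
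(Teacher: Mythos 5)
Your proposal is correct and follows essentially the same route as the paper: part (a) via Lemma \ref{lem3.1} combined with the distinctness of the chosen characters $\tau_1,\dots,\tau_{\tilde r}$ (the paper organizes this as a strong induction over subsets rather than your descent, but the mechanism is identical), and part (b) by applying Theorem \ref{thm2.2} to the $G$-invariant strictly convex cone $c_1\wedge\cdots\wedge c_{\tilde r+j-1}\wedge\bar{\mathcal K}$ inside $c_1\wedge\cdots\wedge c_{\tilde r+j-1}\wedge H^{1,1}(X,{\bf R})$ and iterating. The two technical points you flag (the endpoint of the descent, which reduces to the case of two non-parallel nef classes, and the representability/closedness of the image cone, which the paper handles via the slant-product argument) are exactly the points the paper's proof also has to address.
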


\begin{proof}
To prove (a), we assume without loss of generality that the first $\tilde r$ coordinates of the map $\pi$ generates the image of the map $\pi$, and let us denote by $\tau_1,\cdots, \tau_{\tilde r}$ such $\tilde r$ coordinates. Let $c_i$ be a non-zero class in $\Gamma_{c_i}$ $(1\le i\le \tilde r)$. Then for any $I=\{ i_1, \cdots, i_k \}\subset \{ 1,2,\cdots, \tilde r \}$, let
\[
 c_I=c_{i_1}\wedge \cdots \wedge c_{i_{k-1}}\wedge c_{i_k}.
\]
Then it suffices to show that $c_I\ne 0$ for any subset $I$ of $\{ 1,2,\cdots, \tilde r \}$. To show it, we let $I=\{1,2,\cdots, k\}$ for simplicity. The case of $k=1$ is trivial. So suppose that
\begin{equation*}
\begin{split}
&c_1\wedge \cdots \wedge c_{k-2}\wedge c_{k-1}\ne 0, \quad c_1\wedge \cdots \wedge c_{k-2}\wedge c_k\ne 0,\\
& c_1\wedge \cdots \wedge c_{k-1}\wedge c_k=0.
\end{split}
\end{equation*}
If we apply Lemma \ref{lem3.1} for $t=k-2$, $c=c_{k-1}$, and $c'=c_k$, then it is easy to see that $\tau_{k-1}(f)=\tau_k (f)$ for all $f\in G$. This implies that the image of $\pi$ lies in the hyperplane $\{ x_{k-1}=x_{k} \}$, which contradicts the choice of $\tau_i$'s. This completes the proof of (1).

For (b), we continue to use the notations used in the proof of (a). Then consider the induced action of $G$ on the real subspace $c_1\wedge\cdots\wedge c_{\tilde r}\wedge H^{1,1}(X, {\bf R})$ of $H^{\tilde r+1,\tilde r+1}(X, {\bf R})$, where $c_1, \cdots, c_{\tilde r}$ are the non-zero classes invariant under $G$ which are obtained in (a) above. Then $c_1\wedge\cdots\wedge c_{\tilde r}\wedge \bar{\mathcal K}$ spans $c_1\wedge\cdots\wedge c_r\wedge H^{1,1}(X, {\bf R})$. Moreover, by using the dual homology classes of $c_i$'s $(i=1,2\cdots, {\tilde r})$ under the natural pairing and the slant product operation (e.g., see p. 286 of \cite{Sp}), one can show that  $c_1\wedge\cdots\wedge c_{\tilde r}\wedge \bar{\mathcal K}$ is a strictly convex closed cone in $c_1\wedge\cdots\wedge c_r\wedge H^{1,1}(X, {\bf R})$ which is invariant under $G$. So, if we apply Theorem \ref{thm2.2} of Lie-Kolchin type to $c_1\wedge\cdots\wedge c_{\tilde r}\wedge \bar{\mathcal K}$ in $c_1\wedge\cdots\wedge c_{\tilde r}\wedge H^{1,1}(X, {\bf R})$, we obtain a non-zero class $c_{\tilde r+1}$ in $\bar{\mathcal K}$ such that
\begin{equation*}
\begin{split}
&f^\ast(c_1\wedge c_2\wedge \cdots \wedge c_{\tilde r+1})\\
&=\exp(\tau_{1}(f))\exp(\tau_{2}(f))\cdots \exp(\tilde\tau_{\tilde r+1}(f)) c_1\wedge c_2\wedge \cdots \wedge c_{\tilde r+1}
\end{split}
\end{equation*}
for some functions $\tilde\tau_{\tilde r+1}:G\to {\bf R}$. Next, make use of the mathematical induction to obtain the rest of the non-zero classes $c_{\tilde r+j}\in \bar{\mathcal K}$ satisfying the equation \eqref{eq3.1}. As in Lemma \ref{lem2.2}, it is straightforward to show that each $\tilde\tau_{\tilde r+j}$ $(j=1,2,\cdots, n-\tilde r-1)$ is a homomorphism. So we are done.
\end{proof}

With these preliminaries, we give a definition of the homomorphism $\Pi$ which will be used in Theorem \ref{thm3.1}.

\begin{defn}
Let $\tilde r$ denote the rank of the image of the homomorphism $\pi$. From now on until the end of this section, we assume without loss of generality that the first $\tilde r$ coordinates of the map $\pi$ generates the image of the map $\pi$, and let us denote by $\tau_1,\cdots, \tau_{\tilde r}$ such $\tilde r$ coordinates.

\begin{itemize}
\item[\rm (a)] If $\tilde r\ge n-1$, a homomorphism $\Pi: G\to {\bf R}^{n-1}$ is just defined to be the homomorphism $\pi$ composed by the canonical projection onto the first $n-1$ factors.

\item[\rm (b)] On the other hand, if $1\le \tilde r\le n-2$, we define a homomorphism
\[
\Pi: G\to {\bf R}^{n-1},\quad f\mapsto (\tau_1(f),\cdots, \tau_{\tilde r}(f), \tilde\tau_{\tilde r+1}(f),\cdots, \tilde \tau_{n-1}(f)).
\]
\end{itemize}
\end{defn}

We then are ready to prove Theorem \ref{thm3.1}, which clearly gives a proof of Theorem \ref{thm1.2} as a byproduct.

\begin{theorem} \label{thm3.1}
Let $G$ be a connected solvable group of automorphisms of a compact K\" ahler manifold $X$ of complex dimension $n$, and let $N(G)$ be the normal subgroup of $G$ defined as in {\rm Theorem \ref{thm1.2}}. Then the induced homomorphism $\Pi: G/N(G)\to {\bf R}^{n-1}$ is injective and its image is discrete. In particular, $G/N(G)$ is a free abelian group of rank $\le n-1$.
\end{theorem}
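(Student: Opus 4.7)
The plan is to deduce Theorem~\ref{thm3.1} from three intermediate claims: (i) the homomorphism $\Pi$ kills $N(G)$, hence descends to a homomorphism $G/N(G) \to \mathbf{R}^{n-1}$; (ii) the descended map is injective, i.e.\ $\ker \Pi \subseteq N(G)$; and (iii) the image $\Pi(G)$ is a discrete subgroup of $\mathbf{R}^{n-1}$. Once these are in place, the structure theorem for discrete subgroups of Euclidean space identifies $G/N(G)$ with a free abelian group of rank at most $n-1$.

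For (i), if $f \in N(G)$ then $\rho(f) = 1$, and the inequality $\rho(f^{-1}) \le \rho(f)^{n-1}$ recorded in the introduction gives $\rho(f^{-1}) = 1$ as well, so every eigenvalue of $f^*$ on $H^{1,1}(X, \mathbf{R})$ has modulus exactly $1$. Since each $\exp(\tau_i(f))$ is a positive real eigenvalue (attached to the common eigenvector $c_i \in \bar{\mathcal K}$), it must equal $1$; the same reasoning applied to the induced $f^*$-action on $c_1 \wedge \cdots \wedge c_{\tilde r + j - 1} \wedge H^{1,1}(X, \mathbf{R})$ forces each $\tilde\tau_{\tilde r+j}(f)$ to vanish as well, so $\Pi(f) = 0$.

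For (ii), suppose for contradiction that $f \in \ker \Pi$ has $\rho(f) > 1$. By Theorem~\ref{thm2.1} there is a nonzero $c_f \in \bar{\mathcal K}$ with $f^*(c_f) = \rho(f)\, c_f$, and by Lemma~\ref{lem3.2} the hypothesis $\Pi(f) = 0$ gives $f^*(c_1 \wedge \cdots \wedge c_k) = c_1 \wedge \cdots \wedge c_k$ for every $1 \le k \le n-1$. Hence $c_1 \wedge \cdots \wedge c_{n-1} \wedge c_f \in H^{n,n}(X, \mathbf{R}) \cong \mathbf{R}$ is scaled by $\rho(f) > 1$ under $f^*$, while $f^*$ acts as the identity on $H^{n,n}$ (as a degree-one biholomorphism), so this top product must vanish. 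I then iterate Lemma~\ref{lem3.1} with $c = c_{t+1}$, $c' = c_f$, $\lambda = 1$, $\lambda' = \rho(f)$, running $t$ from $n-2$ down to $1$; the nonvanishing hypotheses $c_1 \wedge \cdots \wedge c_{t+1} \ne 0$ are supplied by Lemma~\ref{lem3.2}(a), and the iteration terminates at $c_1 \wedge c_f = 0$ in $H^{2,2}(X, \mathbf{R})$. To close the argument I invoke Hodge-Riemann: for any K\"ahler class $\omega$, the bilinear form $B_\omega(u,v) := \int_X u \wedge v \wedge \omega^{n-2}$ on $H^{1,1}(X, \mathbf{R})$ has Lorentzian signature $(1, h^{1,1}-1)$ and is nonnegative on $\bar{\mathcal K}$; the vanishing $B_\omega(c_1, c_f) = 0$ for all $\omega$, combined with $c_1, c_f \in \bar{\mathcal K} \setminus \{0\}$, forces $c_f$ to be a positive multiple of $c_1$ via a reverse Cauchy-Schwarz / Witt-index argument. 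But then $\rho(f)\, c_f = f^*(c_f) = c_f$ gives $\rho(f) = 1$, a contradiction.

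For (iii), each $f^*$ preserves the integer lattice $H^2(X, \mathbf{Z})/\mathrm{torsion}$, so the characteristic polynomial of $f^*$ lies in $\mathbf{Z}[t]$ with degree bounded by $h^2(X)$; consequently the eigenvalues $\exp(\tau_i(f))$ and $\exp(\tilde\tau_j(f))$ are algebraic integers of uniformly bounded degree, and a standard arithmetic finiteness argument (as in \cite{DS}) yields discreteness of $\Pi(G)$. Combined with (ii), this embeds $G/N(G)$ as a discrete subgroup of $\mathbf{R}^{n-1}$, hence as a free abelian group of rank at most $n-1$. The main obstacle I anticipate is the last step of (ii): closing the Lemma~\ref{lem3.1} iteration at $c_1 \wedge c_f = 0$ genuinely requires a positivity input (Hodge-Riemann or an equivalent Khovanskii-Teissier analysis), and bookkeeping $f^*$-eigenvalues through the iterated wedges takes some care to verify Lemma~\ref{lem3.1}'s hypotheses at every stage.
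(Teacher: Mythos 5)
Your steps (i) and (ii) are essentially sound and follow the paper's own route: the injectivity argument is the same mechanism the paper uses (a Birkhoff--Perron--Frobenius eigenvector $c_f$ for $\rho(f)$, triviality of $f^\ast$ on $H^{n,n}$, an iteration of Lemma~\ref{lem3.1} along the classes produced by Lemma~\ref{lem3.2}, and finally the positivity statement that two non-zero nef classes with vanishing product are proportional, which is exactly Corollary 3.2 of \cite{DS} --- you do not need to re-derive it from Hodge--Riemann). Your bookkeeping also works in the case $\tilde r\le n-2$, since by \eqref{eq3.1} the partial wedges $c_1\wedge\cdots\wedge c_k$ are still eigenvectors even though the individual $c_{\tilde r+j}$ are not.

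The genuine gap is step (iii). Knowing that $\exp(\tau_i(f))$ and the products involving $\exp(\tilde\tau_j(f))$ are eigenvalues of an integral matrix of bounded size makes them algebraic integers of uniformly bounded degree, but that alone does not give discreteness: algebraic integers of bounded degree are dense in ${\bf R}$ (e.g.\ $1+\sqrt{N^2+1}-N$ accumulates at $1$), because nothing bounds their conjugates. To bound the integer coefficients of the characteristic polynomial of $f^\ast$ --- which is what actually yields finiteness of the possible values of $\Pi(f)$ near $0$ --- you must first bound \emph{all} eigenvalues of $f^\ast$, i.e.\ bound $\rho(f)$ and $\rho(f^{-1})$, uniformly for every $f$ with $|\tau_j(f)|<\delta$ (and $|\tilde\tau_j(f)|<\delta$). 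This quantitative implication is the heart of the paper's proof: for an arbitrary $f$ of positive entropy one shows, by exactly the wedge/Lemma~\ref{lem3.1} chain you ran in (ii) but now with the $\tau_j(f)$ merely small rather than zero, that either $c_1\wedge\cdots\wedge c_{n-1}\wedge\kappa_1\ne 0$, forcing $\rho(f)=\exp(-\sum_j\tau_j(f))\le e^{(n-1)\delta}$, or the chain terminates and Lemma~\ref{lem3.1} forces the two eigenvalues in \eqref{eq3.3} to coincide, giving $\rho(f)=\exp(\tau_l(f))\le e^\delta$ for some $l$; then $\rho(f^{-1})\le\rho(f)^{n-1}$ bounds everything, and only then does integrality give finitely many characteristic polynomials and hence isolation of $0$ in the image. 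Your proposal never states this bound, and the shortcut you assert in its place is false as written; since discreteness is half of the theorem, this must be filled in (it is a quantitative rerun of your own step (ii), so the repair is within reach).
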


\begin{proof}
In order to prove the proposition, we shall first show that $\Pi(G/N(G))$ is discrete in the additive group ${\bf R}^{n-1}$ with the standard topology. To do so, it suffices to show that $(0)_{i=1}^{n-1}$ is an isolated point in the image of $\Pi$ with the induced topology from ${\bf R}^{n-1}$, since the map $\Pi$ is a homomorphism.

We first deal with the case of $\tilde r\ge n-1$. To do so, let $\delta$ be an arbitrary positive real number. Then consider the set of all elements $f\in G/N(G)$ satisfying the following condition:
\begin{equation} \label{eq3.2}
|\tau_j(f)| < \delta, \quad j=1,2,\cdots, n-1.
\end{equation}
By Theorem \ref{thm2.1} of Birkhoff-Perron-Frobenius, there exist non-zero classes $\kappa_i$ ($i=1,2$) in the closure of the K\" ahler cone such that $f^\ast(\kappa_i)=\lambda_i \kappa_i$ with $\lambda_1=\rho(f)>1$ and $\lambda_2^{-1}=\rho(f^{-1})>1$. Moreover,  it follows from Lemma \ref{lem3.2} (a) that there exist non-zero classes $c_{j}\in \bar{\mathcal K}$ ($j=1,2,\cdots, n-1$) such that
\[
c_1\wedge c_2\wedge \cdots \wedge c_{n-1}\ne 0\ \text{and}\ f^\ast(c_{j})=\exp(\tau_{j}(f))c_{j}
\]
for all $f\in G$. Then we have
\begin{equation*}
\begin{split}
&c_1\wedge c_2\wedge \cdots \wedge c_{n-1}\wedge \kappa_i = f^\ast(c_1\wedge c_2\wedge \cdots\wedge c_{n-1}\wedge \kappa_i)\\
&=\exp(\tau_1(f))\cdots \exp(\tau_{n-1}(f))\lambda_i (c_1\wedge c_2\wedge \cdots \wedge c_{n-1}\wedge \kappa_i),
\end{split}
\end{equation*}
where in the first equality we used the fact $\text{deg}(f)=1$.

If $c_1\wedge c_2\wedge \cdots \wedge c_{n-1} \wedge \kappa_i\ne 0$ for all $i=1, 2$, then we have
\[
\exp(\tau_1(f))\cdots \exp(\tau_{n-1}(f))\lambda_i=1.
\]
Thus $1 < \lambda_1=\lambda_2 <1$, which is a contradiction.

Next, we assume that $c_1\wedge c_2\wedge \cdots \wedge c_{n-1}\wedge \kappa_1= 0$ by changing the role of $f^{-1}$ by $f$ and vice versa, if necessary. If $c_1 \wedge \kappa_1=0$ then it follows from Corollary 3.2 of \cite{DS} that $c_1$ is parallel to $\kappa_1$, and so we have
\[
\rho(f)=\lambda_1=\exp(\tau_1(f)) < e^\delta.
\]
On the other hand, if $c_1\wedge \kappa_1\ne 0$, then we let $l$ be the minimal integer $\ge 2$ such that $c_1\wedge c_2\wedge\cdots \wedge c_l\wedge \kappa_1=0$. Then it follows from the definition of $l$ that $c_1\wedge c_2\wedge\cdots \wedge c_{l-1}\wedge \kappa_1\ne 0$, and so clearly $c_1\wedge c_2\wedge\cdots \wedge c_{l-2}\wedge c_{l-1}\ne 0$ and $c_1\wedge c_2\wedge\cdots \wedge c_{l-2}\wedge \kappa_1\ne 0$. Furthermore, by its construction and $l\le n-1\le \tilde r$, we have
\[
c_1\wedge c_2\wedge \cdots \wedge c_{l-1}\wedge c_l\ne 0.
\]
It is also obvious that the following two identities hold:
\begin{equation} \label{eq3.3}
\begin{split}
&f^\ast(c_1\wedge \cdots \wedge c_{l-1}\wedge \kappa_1) \\
&= \exp(\tau_1(f))\exp(\tau_{2}(f))\cdots \exp(\tau_{l-1}(f)) \lambda_1 c_1 \wedge \cdots \wedge c_{l-1}\wedge \kappa_1,\\
&f^\ast(c_1\wedge \cdots \wedge c_{l-1}\wedge c_{l}) \\
&= \exp(\tau_1(f))\exp(\tau_{2}(f))\cdots \exp(\tau_{l}(f)) c_1 \wedge \cdots \wedge c_{l-1} \wedge c_{l}.
\end{split}
\end{equation}
Now, once again applying Lemma \ref{lem3.1} to the equations in \eqref{eq3.3} with the roles of $t=l-1$, $c=\kappa_1$ and $c'=c_{l}$, we have
\begin{equation*}
\begin{split}
&\exp(\tau_1(f))\exp(\tau_{2}(f))\cdots \exp(\tau_{l-1}(f))\lambda_1\\
&=\exp(\tau_1(f))\exp(\tau_{2}(f))\cdots \exp(\tau_{l-1}(f))\exp(\tau_{l}(f)).
\end{split}
\end{equation*}
Thus we obtain
\begin{equation*}
\lambda_1=\rho(f)=\exp(\tau_{l}(f)) < e^\delta.
\end{equation*}
Since $\rho(f^{\pm 1})$ is always less than or equal to $\rho(f^{\mp})^{n-1}$, we also have $\rho(f^{-1})\le \exp({(n-1)\delta})$ by the equation \eqref{eq3.3}. Hence, for all $f\in G$ satisfying the inequality \eqref{eq3.2} the absolute values of all the eigenvalues of $f^\ast|_{H^{1,1}(X, {\bf C})}$ are bounded by $\exp((n-1)\delta)$.

Now let $\Psi_f(x)$ be the characteristic polynomial of $f^\ast$ on $H^{1,1}(X, {\bf C})$. Then $\Psi_f(x)$ can be assumed to be a polynomial with integer coefficients, since $G$ can be regarded as a subgroup of $\text{GL}(H^2(X, {\bf Z}))$. Since the absolute values of all the eigenvalues of $f^\ast|_{H^{1,1}(X, {\bf C})}$ of $f$ satisfying \eqref{eq3.2} are shown to be bounded by $\exp((n-1)\delta)$, the coefficients of the characteristic polynomials $\Psi_f(x)$ of all such $f^\ast$'s are all bounded. This implies that there are only finitely many such characteristic polynomials and the set of all such vectors $\pi(f)$ is finite. Therefore the zero vector $(0)_{i=1}^{n-1}$ is isolated in $\pi(G/N(G))$ and so the image $\pi(G/N(G))$ is discrete.

Next assume that there is an element $f$ of positive entropy with $\pi(f)=(0)_{i=1}^{n-1}\in {\bf R}^{n-1}$. Then it follows from the exactly same arguments as above that there exists a $\tau_l(f)$ for some $1\le l\le n-1$ such that $\rho(f)=\exp(\tau_l(f))>1$. But, this is obviously a contradiction. This implies that the kernel of the map $\pi$ is just $N(G)$. So the induced map $\pi: G/N(G)\to {\bf R}^{n-1}$ is actually injective.  For case of $\tilde r\ge n-1$, this completes the proof that $G/N(G)$ is free abelian of rank $n-1$ and that $\tilde r=n-1$.

The proof of other case of $\tilde r\le n-2$ is completely similar. But this time we need to use $\tau_1$, $\cdots$, $\tau_{\tilde r}$, $\tilde\tau_{\tilde r +1}$, $\cdots$, $\tilde\tau_{n-1}$ instead of $\tau_1$, $\cdots$, $\tau_{n-1}$. We leave its detailed proof to a reader. This completes the proof of Theorem \ref{thm3.1}.
\end{proof}

Note that the proof of Theorem \ref{thm3.1} shows that if $\tilde r\ge n-1$, then $G/N(G)$ is a free abelian group of rank $n-1$. Conversely, if $G/N(G)$ is a free abelian group of rank $n-1$, then $\tilde r\ge n-1$, since the map $\Pi$ is injective. On the other hand, if $\tilde r \le n-2$, then we can just say that $G/N(G)$ is a free abelian group of rank $\le n-1$. As another interesting consequence of the injectivity of $\Pi$ in Theorem \ref{thm3.1}, one can prove the following proposition which is straightforward from Proposition 4.4 of \cite{DS}.

\begin{prop} \label{prop3.1}
Let $G$ be a connected solvable subgroup of the automorphism group ${\rm Aut}(X)$ and let $N(G)$ be the normal subgroup of null entropy, as in {\rm Theorem \ref{thm1.2}}. Let $r$ denote the rank of the quotient group $G/N(G)$. Then the following properties hold:

\begin{itemize}
\item[\rm (a)] Let $h_k$ be the real dimension of the cohomology group ${H^{k,k}(X, {\bf R}})$. Assume that $r= n-1$. Then $h_k$ satisfies
    \[
    h_k\ge \binom{n-1}{k},\quad 1\le k\le n-1.
    \]
    In addition, if $k$ divides $n-1$ as well, then $h_k\ge \binom{n-1}{k}+1$.

\item[\rm (b)] There exist $(r+1)$ non-zero classes $c_1, \ldots, c_{r+1}$ in $\bar{\mathcal K}$ such that
    \[
    c_1\wedge c_2\wedge \cdots \wedge c_{r+1}\ne 0.
    \]
\end{itemize}
\end{prop}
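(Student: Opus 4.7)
\noindent\textbf{Proof plan for Proposition \ref{prop3.1}.} The approach is to adapt Proposition 4.4 of Dinh--Sibony \cite{DS} to our solvable setting, using the structural classes already produced by Lemma \ref{lem3.2} together with the injectivity of $\Pi$ from Theorem \ref{thm3.1}. The work splits into three pieces: part (b), the main lower bound in (a), and the ``$+1$'' enhancement in (a) when $k\mid(n-1)$.

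\medskip

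\noindent\emph{Step 1 (part (b)).} I first observe that Lemma \ref{lem3.2} in all cases produces non-zero classes $c_1,\ldots,c_{n-1}$ in $\bar{\mathcal K}$ with $c_1\wedge\cdots\wedge c_{n-1}\ne 0$: if $\tilde r\ge n-1$ one uses Lemma \ref{lem3.2}(a) on the first $n-1$ indices, and if $\tilde r\le n-2$ one combines Lemma \ref{lem3.2}(a) and (b). If $r\le n-2$, then $r+1\le n-1$ and I simply keep the first $r+1$ of these classes. If $r=n-1$, I still need one additional class; since by Poincar\'e duality the pairing
\[
H^{n-1,n-1}(X,{\bf R})\times H^{1,1}(X,{\bf R})\to H^{n,n}(X,{\bf R})\cong {\bf R}
\]
is non-degenerate, some $c\in H^{1,1}(X,{\bf R})$ pairs non-trivially with $c_1\wedge\cdots\wedge c_{n-1}$. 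Because $\mathcal K$ is open and spans $H^{1,1}(X,{\bf R})$, such $c$ may be chosen in $\mathcal K\subset\bar{\mathcal K}$, completing (b).

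\medskip

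\noindent\emph{Step 2 (main bound in (a)).} Assume $r=n-1$; by the discussion following Theorem \ref{thm3.1} we then have $\tilde r\ge n-1$, so Lemma \ref{lem3.2}(a) yields $c_1,\ldots,c_{n-1}\in\bar{\mathcal K}$, each an eigenvector under $G$ with characters $\tau_1,\ldots,\tau_{n-1}$ that are linearly independent (these are the first $\tilde r$ coordinates of $\pi$, by the normalization made in Section \ref{sec3}). For a subset $I\subset\{1,\ldots,n-1\}$ with $|I|=k$, set $c_I=\bigwedge_{i\in I}c_i$. Since $c_1\wedge\cdots\wedge c_{n-1}\ne 0$, every subproduct $c_I$ is non-zero, and
\[
f^*c_I=\exp\Bigl(\sum_{i\in I}\tau_i(f)\Bigr)c_I.
\]
The $\binom{n-1}{k}$ characters $\sum_{i\in I}\tau_i$ are pairwise distinct because the $\tau_i$'s are linearly independent; consequently, some $f\in G$ yields $\binom{n-1}{k}$ distinct eigenvalues of $f^*$ on $\mathrm{span}\{c_I\}_{|I|=k}$. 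Hence the $c_I$'s are linearly independent in $H^{k,k}(X,{\bf R})$, giving $h_k\ge\binom{n-1}{k}$.

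\medskip

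\noindent\emph{Step 3 (the ``$+1$'' enhancement).} When $k\mid(n-1)$, I must exhibit one additional class in $H^{k,k}(X,{\bf R})$ linearly independent from $\{c_I:|I|=k\}$. This is the main obstacle of the proof: the spectral argument of Step 2 is blind to any class whose character happens to coincide with some $\sum_{i\in I}\tau_i$, so one needs an extra input specific to the divisibility $k\mid(n-1)$. My plan is to transplant the construction from Proposition 4.4 of \cite{DS}: writing $n-1=km$ and working with a suitable Kähler class obtained from the eigendata of $c_1,\ldots,c_{n-1}$, one exhibits a class in $H^{k,k}(X,{\bf R})$ whose weight under $G$ is not of the form $\sum_{i\in I}\tau_i$ for any $|I|=k$ (the divisibility is what makes the construction compatible with the grading and guarantees non-vanishing). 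This class therefore cannot lie in $\mathrm{span}\{c_I\}$, improving the bound by one. I expect the adaptation to the solvable case to be formal, because all the cohomological identities involved are already $G$-equivariant via the eigenclasses produced by Lemma \ref{lem3.2} and only depend on $\tilde r\ge n-1$.
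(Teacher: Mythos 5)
Your overall route is the same as the paper's: both reduce Proposition \ref{prop3.1} to Proposition 4.4 of \cite{DS}, transported to the solvable setting via the eigenclasses of Lemma \ref{lem3.2} and the homomorphism $\Pi$ of Theorem \ref{thm3.1}. You in fact supply more detail than the paper does for part (b) and for the bound $h_k\ge\binom{n-1}{k}$, and those details are essentially sound (the Poincar\'e-duality/openness argument producing the extra K\"ahler class when $r=n-1$ is correct, as is keeping a subproduct when $r\le n-2$). One justification in Step 2 needs repair, though: the pairwise distinctness of the characters $\sum_{i\in I}\tau_i$ (and the existence of a single $f$ realizing $\binom{n-1}{k}$ distinct eigenvalues) does not follow from the normalization that $\tau_1,\dots,\tau_{\tilde r}$ are the first coordinates of $\pi$. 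A subgroup of ${\bf R}^{m}$ of rank $n-1$ can perfectly well lie inside a proper hyperplane (e.g.\ the group generated by $(1,\sqrt2)$ and $(\sqrt2,2)$ has rank $2$ but spans a line), so full rank alone does not give linear independence of the $\tau_i$ as functions on $G$. What you must invoke is that $\Pi(G/N(G))$ is \emph{discrete} of rank $n-1$ in ${\bf R}^{n-1}$ (Theorem \ref{thm3.1}), hence a full lattice spanning ${\bf R}^{n-1}$ and therefore not contained in any finite union of proper hyperplanes; this is precisely the point the paper isolates in its own proof, and with it your Step 2 closes.

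The second caveat concerns Step 3: the ``$+1$'' improvement for $k\mid(n-1)$ is not actually proved in your proposal; you only announce that the construction of \cite{DS} should transplant, and the guessed mechanism (a class in $H^{k,k}(X,{\bf R})$ whose weight avoids every $\sum_{i\in I}\tau_i$) is asserted rather than carried out. Since the paper itself disposes of the whole proposition by appealing to Proposition 4.4 of \cite{DS} (with Theorem \ref{thm2.1} replacing their Lemma 4.1 and with Lemma \ref{lem3.1}), your treatment is no less complete than the paper's at this point; but be aware that this is the one step for which you have given no argument, so the adaptation to the solvable case should be checked against the actual construction in \cite{DS} (where, again, the only nonformal input is the hyperplane-avoidance property of the lattice $\Pi(G/N(G))$) rather than declared formal.
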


\begin{proof}
It suffices to notice that the proof of Proposition 4.4 of \cite{DS} works also for solvable subgroups of automorpthisms. One of the main reasons is that there exists an induced homomorphism $\Pi: G/N(G)\to {\bf R}^{n-1}$ which is injective by Theorem \ref{thm3.1}. To be precise, for the proof we need to use Theorem \ref{thm2.1} instead of Lemma 4.1 in \cite{DS} as well as Lemma \ref{lem3.1}. But this does not make any significant difference in the proof, since in any case $\Pi(G/N(G))$ cannot be contained in a finite union of hyperplanes in ${\bf R}^r$. This completes the proof of Proposition \ref{prop3.1}.
\end{proof}

In particular, Proposition \ref{prop3.1} (b) implies that $r+1$ is less than or equal to $n$, due to the dimensional reason of $X$. Therefore, $r$ should be less than or equal to $n-1$. Once again, this proves Theorem \ref{thm1.2}.

\section{Proof of Theorem \ref{thm1.3}} \label{sec4} 

The aim of this section is to give a proof of Theorem \ref{thm1.3}. To do so, assume that the rank $r(G)$ of $G/N(G)$ is equal to $n-1$. Then it follows from Theorem \ref{thm1.1} and the homomorphism $\Pi$ constructed in Section \ref{sec3} that there exist non-zero classes $c_1,\cdots, c_n$ in the closure $\bar{\mathcal K}$ of the K\" ahler cone ${\mathcal K}$ such that
\begin{itemize}
 \item $f^\ast c_i = c_i$ for all $1\le i\le n$ and all $f\in N(G)$.
 \item $c_1\wedge c_2\wedge \cdots \wedge c_n\ne 0$.
\end{itemize}
Now, let $c=c_1+c_2\cdots+c_n$. Then clearly $f^\ast(c)=c$ for all $f\in N(G)$. Since $c_1\wedge c_2\cdots\wedge c_n\ne 0$, it is also clear that $c^n\ne 0$. It is known by a theorem of Demailly and Paun in \cite{DP} that the K\" ahler cone ${\mathcal K}$ is connected and that every class of ${\mathcal K}$ satisfies $\int_X c^n >0$. Since $c$ lies in $\bar{\mathcal K}$ and $c^n\ne 0$, it follows that $c$ lies in the K\" ahler cone ${\mathcal K}$ and so $c$ is a K\" ahler class.

Now, let ${\rm Aut}_c(X)$ denote the automorphism group preserving the K\" ahler class $c$. Then $N(G)$ is a subgroup of ${\rm Aut}_c(X)$. Recall that the quotient group ${\rm Aut}_c(X)/{\rm Aut}_0(X)$ is a finite group by a theorem of Liberman (Proposition 2.2 in \cite{Li}). Hence, if ${\rm Aut}_0(X)$ is trivial, then ${\rm Aut}_c(X)$ is finite. Therefore $N(G)$ is a finite set, which completes the proof of Theorem \ref{thm1.3}.



\end{document}